\documentclass{article}
\usepackage{amsfonts}
\usepackage{amssymb,amsmath}

\setcounter{MaxMatrixCols}{10}

\setlength{\voffset}{-25mm}
\setlength{\hoffset}{-25mm}
\setlength{\textwidth}{170mm}
\setlength{\textheight}{240mm}
\newtheorem{theorem}{Theorem}

\newtheorem{lemma}[theorem]{Lemma}

\newenvironment{proof}[1][Proof]{\noindent\textbf{#1.} }{\ \rule{0.5em}{0.5em}}

\begin{document}

\title{Inverse Problem for a Class of Dirac Operators with Spectral
Parameter Contained in Boundary Conditions}
\author{Khanlar R. Mamedov and Ozge Akcay$\thanks{corresponding author}$ \\
\\
Mathematics Department, Mersin University, 33343, Mersin, Turkey \\
\\
e-mail: hanlar@mersin.edu.tr and ozge.akcy@gmail.com}
\date{}
\maketitle

\begin{abstract}
This paper is related to an inverse problem for a class of Dirac operators
with discontinuous coefficient and eigenvalue parameter contained in
boundary conditions. The asymptotic formula of eigenvalues of this problem
is examined. The theorem on completeness of eigenfunctions is proved. The
expansion formula with respect to eigenfunctions is obtained and Parseval
equality is given. Weyl solution and Weyl function are constructed.
Uniqueness theorem of the inverse problem respect to the Weyl function is
proved. \newline
\newline
\textbf{Mathematics Subject Classification (2010):} 34A55, 34L40, 34L10 
\newline
\newline
\textbf{Keywords:} Dirac operator; Weyl function; inverse problem;
completeness theorem
\end{abstract}

\section{Introduction}

\quad Let 
\[
\sigma_{1}=\left(\begin{array}{cc}
0 & i \\
-i & 0
\end{array}\right), \ \ \ \ \ 
\sigma_{2}=\left(\begin{array}{cc}
1 & 0 \\
0 & -1
\end{array}\right), \ \ \ \ \ 
\sigma_{3}=\left(\begin{array}{cc}
0 & 1 \\
1 & 0
\end{array}\right)
\] be the well-known Pauli-matrices, which has these properties: $\sigma^{2}_{i}=I,$ ($I$ is $2\times2$ identity matrix) 
$\sigma_{i}^{*}=\sigma_{i}$ (self-adjointness)
$i=1,2,3$ and for $i\neq j$, $\sigma_{i}\sigma_{j}=-\sigma_{j}\sigma_{i}$ (anticommutativity).

We consider the following boundary value problem generated by the canonical Dirac system
\begin{equation} \label{1}
By^{\prime}+\Omega \left( x\right) y=\lambda \rho \left( x\right) y,\ \ \ \ \ 0<x<\pi 
\end{equation}
with boundary conditions
\begin{equation} \label{2}
\begin{array}{cc}
U_{1}(y):= b_{1}y_{2}\left( 0\right) +b_{2}y_{1}\left( 0\right)
-\lambda \left( b_{3}y_{2}\left( 0\right) +b_{4}y_{1}\left( 0\right)\right) =0, \\
\\
U_{2}(y):= c_{1}y_{2}\left( \pi \right) +c_{2}y_{1}\left( \pi \right)
 +\lambda \left( c_{3}y_{2}\left( \pi \right) +c_{4}y_{1}\left( \pi\right) \right) =0,
\end{array}
\end{equation}
where
\[
B=\frac{1}{i}\sigma_{1}, \ \ \ \ \Omega(x)=\sigma_{2}p(x)+\sigma_{3}q(x) , \ \ \ \ y\left( x\right)=\left( 
\begin{array}{c}
y_{1}\left( x\right) \\ 
y_{2}\left( x\right)
\end{array}
\right),
\] $p(x),$ $q(x)$ are real measurable functions, $p(x)\in L_{2}(0,\pi ),$ $q(x)\in L_{2}(0,\pi),$ $\lambda$ is a spectral parameter, 
\[
\rho \left( x\right) =\left\{ 
\begin{array}{c}
1,\ \ \ \ \ 0\leq x\leq a, \\ 
\alpha ,\ \ \ \ \ a<x\leq \pi ,
\end{array}
\right.
\] and $1\neq \alpha >0.$ Let us define $k_{1}=b_{1}b_{4}-b_{2}b_{3}>0,$ $k_{2}=c_{1}c_{4}-c_{2}c_{3}>0.$
The main aim of this paper is to solve the inverse problem for the boundary value problem (\ref{1}), (\ref{2})
by Weyl function on a finite interval.

The inverse problem and the spectral properties of Dirac operators were investigated in detail by many authors: 
\cite{Ab, Ag, Cl, Col1, Sa3, Fr, GD, DL, Ga, Ho, K, Kur1, Kur2, La, Lev, Li, Mam, Col2, Mo, Na, Sa, Th, Wa, Ya1, Ya2, Ya3, Yur}. 
The inverse spectral problems according to two spectra was solved in 
\cite{GD}. 
Using Weyl-Titchmarsh function, direct and inverse problems for Dirac type-system were studied in \cite{Sa3, Fr, Sa}.
Solution of the inverse quasiperiodic problem for Dirac system was given in \cite{Na}. 
For weighted Dirac system, inverse spectral problems was examined in \cite{Wa}.
Reconstruction of Dirac operator from nodal data was carried out in \cite{Ya2}.
Necessary and sufficient conditions for the solution of Dirac operators with discontinuous 
coefficient was obtained in \cite{Mam}. 
Inverse problem for interior spectral data of the Dirac operator was given in \cite{Mo}. 
For Dirac operator, Ambarzumian-type theorems were proved in \cite{K, Ya3}. 
On a positive half
line, inverse scattering problem for a system of Dirac equations of order $2n$
was completely solved in \cite{Ga} and when boundary condition contained spectral
parameter, for Dirac operator, inverse scattering problem was worked in \cite{Col1, Col2}.
Spectral boundary value problem in a $3$ dimensional bounded domain for the Dirac system was studied in \cite{Ag}.  
The applications of Dirac differential equations system has been widespread
in various areas of physics, such as \cite{Ok1, Ok2, Mon, Th}.
  
This paper is organized as follows: in section 2, the operator formulation of problem (\ref{1}),(\ref{2}) and some spectral properties of the operator are given. In section 3, asymptotic formula of eigenvalues of the problem (\ref{1}),(\ref{2}) is examined. In section 4, we prove completeness theorem of eigenfunctions and obtain Parseval equality. In section 5, Weyl solution, Weyl function are defined and uniqueness theorem for inverse problem according to Weyl function is proved. 
 
\section{Operator Formulation and Some Spectral Properties}
An inner product in Hilbert space $H_{\rho }=L_{2,\rho }(0,\pi;\mathbb{C}^{2})\oplus \mathbb{C}^{2}$ is given by
\begin{equation} \label{3}
\left\langle Y,Z\right\rangle =\int_{0}^{\pi }\left\{ y_{1}\left( x\right) 
\overline{z_{1}\left( x\right) }+y_{2}\left( x\right) \overline{z_{2}\left(
x\right) }\right\} \rho \left( x\right) dx+\frac{1}{k_{1}}y_{3}\overline{
z_{3}}+\frac{1}{k_{2}}y_{4}\overline{z_{4}},  
\end{equation}
where
\[
Y=\left( 
\begin{array}{c}
y_{1}(x) \\ 
y_{2}(x) \\ 
y_{3} \\ 
y_{4}
\end{array}
\right) \in H_{\rho }, \ \ \ \ \ Z=\left( 
\begin{array}{c}
z_{1}(x) \\ 
z_{2}(x) \\ 
z_{3} \\ 
z_{4}
\end{array}
\right) \in H_{\rho }.
\] Let us define the operator $L$: 
\[
L(Y):=\left( 
\begin{array}{c}
l(y) \\ 
b_{1}y_{2}\left( 0\right) +b_{2}y_{1}(0) \\ 
-\left( c_{1}y_{2}\left( \pi \right) +c_{2}y_{1}(\pi )\right)
\end{array}
\right)
\] with domain 
\[
D(L):=\left\{ Y\mid Y=(y_{1}(x),y_{2}(x),y_{3},y_{4})^{T}\in H_{\rho },\
y_{1}(x),y_{2}(x)\in AC[0,\pi ], \ \ \ \ \ \ \ \ \ \ \ \ \ \ \  \right.
\]
\[\left. \ \ \ \ \ \ \ \ \ \ \ \ \ \ \
y_{3}=b_{3}y_{2}\left( 0\right)+b_{4}y_{1}(0), \ 
y_{4}=c_{3}y_{2}\left( \pi \right) +c_{2}y_{1}(\pi ),
l(y)\in L_{2,\rho}(0,\pi;\mathbb{C}^{2})\right\}
\]
where 
\[
l(y)=\frac{1}{\rho (x)}\left\{ By^{\prime }+\Omega (x)y\right\} .
\] Consequently, the boundary value problem (\ref{1}),(\ref{2}) is equivalent to the
operator equation $LY=\lambda Y$.
\begin{lemma} 
(i) \ The eigenvector functions corresponding to different eigenvalues are
orthogonal.

(ii) \ The eigenvalues of the operator $L$ are real valued.
\end{lemma}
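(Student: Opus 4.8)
The standard approach is to show that $L$ is self-adjoint (or at least symmetric on eigenvectors) with respect to the inner product $\langle\cdot,\cdot\rangle$ defined in (\ref{3}), from which both claims follow by the classical argument. The plan is to prove the single identity $\langle LY,Z\rangle = \langle Y,LZ\rangle$ for all $Y,Z\in D(L)$ and then specialize it.

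First I would compute $\langle LY,Z\rangle - \langle Y,LZ\rangle$ directly. The $L_{2,\rho}$ part contributes $\int_{0}^{\pi}\{(l(y))\cdot\overline{z} - y\cdot\overline{(l(z))}\}\rho(x)\,dx$, and because $l(y)=\frac{1}{\rho}(By'+\Omega y)$, the weight $\rho(x)$ cancels, leaving $\int_{0}^{\pi}\{(By'+\Omega y)\cdot\overline{z} - y\cdot\overline{(Bz'+\Omega z)}\}\,dx$. Since $\Omega(x)=\sigma_{2}p(x)+\sigma_{3}q(x)$ is Hermitian ($\sigma_{i}^{*}=\sigma_{i}$) with real $p,q$, the $\Omega$-terms cancel pointwise. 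Using $B=\frac{1}{i}\sigma_{1}$ and integrating by parts in the remaining term $\int_{0}^{\pi}\{(By')\cdot\overline{z} - y\cdot\overline{(Bz')}\}\,dx$, the integrated-by-parts interior terms cancel (again using $B^{*}=-B$, as $\sigma_{1}^{*}=\sigma_{1}$ but the factor $1/i$ flips the sign), leaving only the boundary bilinear form $[y,z]\big|_{0}^{\pi}$, where $[y,z]=\frac{1}{i}\bigl(y_{1}\overline{z_{2}}-y_{2}\overline{z_{1}}\bigr)$ up to sign conventions I would fix by careful bookkeeping.

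Next I would handle the two finite-dimensional components. The definition of $L$ puts $b_{1}y_{2}(0)+b_{2}y_{1}(0)$ in the third slot and $-(c_{1}y_{2}(\pi)+c_{2}y_{1}(\pi))$ in the fourth, while the inner product weights these by $1/k_{1}$ and $1/k_{2}$. Using the domain relations $y_{3}=b_{3}y_{2}(0)+b_{4}y_{1}(0)$ and $y_{4}=c_{3}y_{2}(\pi)+c_{4}y_{1}(\pi)$ (and similarly for $z$), I would expand $\frac{1}{k_{1}}\bigl[(b_{1}y_{2}(0)+b_{2}y_{1}(0))\overline{z_{3}} - y_{3}\overline{(b_{1}z_{2}(0)+b_{2}z_{1}(0))}\bigr]$ and its $k_{2}$-counterpart. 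The key computation is that, after substituting the boundary relations and using $k_{1}=b_{1}b_{4}-b_{2}b_{3}$ and $k_{2}=c_{1}c_{4}-c_{2}c_{3}$, these finite-dimensional contributions produce exactly the boundary terms $-[y,z]\big|_{0}^{\pi}$ that cancel the leftover from the integration by parts. This cancellation is the crux of the argument and is precisely why the problem is set up with the spectral parameter in the boundary conditions and with these particular weights.

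The main obstacle will be the bookkeeping in this last cancellation: keeping the signs, the factors of $1/i$, and the coefficients $b_{j},c_{j}$ straight so that the boundary form from integration by parts is annihilated exactly by the finite-dimensional terms, which forces the positivity assumptions $k_{1},k_{2}>0$ to enter (they guarantee the weights are well-defined and the inner product is genuinely positive-definite). Once $\langle LY,Z\rangle=\langle Y,LZ\rangle$ is established, part (ii) follows by taking $Z=Y$ an eigenvector with $LY=\lambda Y$: then $\lambda\langle Y,Y\rangle = \overline{\lambda}\langle Y,Y\rangle$ and $\langle Y,Y\rangle>0$ forces $\lambda=\overline{\lambda}$. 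For part (i), if $LY=\lambda Y$ and $LZ=\mu Z$ with $\lambda\neq\mu$, then $(\lambda-\mu)\langle Y,Z\rangle=0$ (using that $\mu$ is real from part (ii)), giving $\langle Y,Z\rangle=0$.
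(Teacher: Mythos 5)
The paper states this lemma without any proof at all (the text moves directly on to defining $\varphi$ and $\psi$), so there is no argument of the authors' to compare against; your proposal supplies the missing standard argument, and it is correct. The crux you identify does work out: with $B=\frac{1}{i}\sigma_{1}=\left(\begin{smallmatrix}0&1\\-1&0\end{smallmatrix}\right)$ real and $B^{*}=-B$, integration by parts leaves the boundary form $\left[\,y_{2}\overline{z_{1}}-y_{1}\overline{z_{2}}\,\right]_{0}^{\pi}$, while expanding the $\frac{1}{k_{1}}$-weighted term with $y_{3}=b_{3}y_{2}(0)+b_{4}y_{1}(0)$ yields exactly $\frac{1}{k_{1}}(b_{1}b_{4}-b_{2}b_{3})\bigl(y_{2}\overline{z_{1}}-y_{1}\overline{z_{2}}\bigr)(0)=\bigl(y_{2}\overline{z_{1}}-y_{1}\overline{z_{2}}\bigr)(0)$ and the $\frac{1}{k_{2}}$-weighted term yields $-\bigl(y_{2}\overline{z_{1}}-y_{1}\overline{z_{2}}\bigr)(\pi)$, so the symmetry identity $\langle LY,Z\rangle=\langle Y,LZ\rangle$ holds on $D(L)$ and (i), (ii) follow as you say. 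Two small points of bookkeeping: the boundary form carries no residual factor $\frac{1}{i}$, since $B$ is already a real matrix (your hedge ``up to sign conventions'' should not be read as allowing a factor of $i$, which would break the cancellation against the manifestly real finite-dimensional terms); and you have tacitly corrected the paper's domain condition $y_{4}=c_{3}y_{2}(\pi)+c_{2}y_{1}(\pi)$ to $y_{4}=c_{3}y_{2}(\pi)+c_{4}y_{1}(\pi)$, which is clearly the intended reading (it is what the norming constants (\ref{5}) and the $k_{2}$-cancellation require), together with the implicit assumption that the $b_{j},c_{j}$ are real. With those understandings the proof is complete.
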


Let $\varphi \left( x,\lambda \right)=\left(\begin{array}{c}
\varphi_{1}(x,\lambda) \\ 
\varphi_{2}(x,\lambda)
\end{array}\right)$ and 
$\psi \left( x,\lambda \right)=\left(
\begin{array}{c}
\psi_{1}(x,\lambda) \\ 
\psi_{2}(x,\lambda)
\end{array}\right)$
be solutions of the system (\ref{1}) satisfying
the initial conditions
\[
\varphi \left( 0,\lambda \right) =\left( 
\begin{array}{c}
\lambda b_{3}-b_{1} \\ 
b_{2}-\lambda b_{4}
\end{array}
\right) ,\ \ \ \ \ \psi \left( \pi ,\lambda \right) =\left( 
\begin{array}{c}
-c_{1}-\lambda c_{3} \\ 
c_{2}+\lambda c_{4}
\end{array}
\right) .
\]

The characteristic function of the problem (\ref{1}),(\ref{2}) is defined by
\begin{equation} \label{4}
\Delta (\lambda )=W[\varphi (x,\lambda ),\psi (x,\lambda )]=\varphi _{2}(x,\lambda )\psi
_{1}(x,\lambda )-\varphi _{1}(x,\lambda )\psi _{2}(x,\lambda),  
\end{equation}
where $W[\varphi (x,\lambda ),\psi (x,\lambda )]$ is Wronskian of the vector
solutions $\varphi (x,\lambda )$ and $\psi (x,\lambda )$. The Wronskian does not depend on $x$. 
It follows from (\ref{4}) that 
\[
\Delta (\lambda )=b_{2}\psi _{1}\left( 0,\lambda \right) +b_{1}\psi
_{2}\left( 0,\lambda \right) -\lambda \left( b_{4}\psi _{1}\left( 0,\lambda
\right) +b_{3}\psi _{2}\left( 0,\lambda \right) \right) =U_{1}\left( \psi
\right)
\]
or 
\[
\Delta (\lambda )=-c_{1}\varphi _{2}\left( \pi ,\lambda \right)
-c_{2}\varphi _{1}\left( \pi ,\lambda \right) -\lambda \left( c_{3}\varphi
_{2}\left( \pi ,\lambda \right) +c_{4}\varphi _{1}\left( \pi ,\lambda
\right) \right) =-U_{2}\left( \varphi \right) .
\]
\begin{lemma} \label{lma} 
The zeros $\lambda _{n}$ of characteristic function coincide
with the eigenvalues of the boundary value problem (\ref{1}),(\ref{2}). The
function $\varphi(x,\lambda _{n})$ and $\psi (x,\lambda _{n})$ are
eigenfunctions and there exist a sequence $\beta _{n}$ such that 
\[
\psi (x,\lambda _{n})=\beta _{n}\varphi (x,\lambda _{n}),\ \ \ \ \beta_{n}\neq 0.
\]
\end{lemma}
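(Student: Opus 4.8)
The plan is to exploit the two representations of the characteristic function recorded above, namely $\Delta(\lambda)=U_1(\psi)=-U_2(\varphi)$, together with the observation that, by the very choice of the initial data, $\varphi(x,\lambda)$ satisfies the left boundary condition $U_1(\varphi)=0$ for every $\lambda$, while $\psi(x,\lambda)$ satisfies the right boundary condition $U_2(\psi)=0$ for every $\lambda$. Both identities follow by substituting the prescribed values $\varphi(0,\lambda)=(\lambda b_3-b_1,\,b_2-\lambda b_4)^{T}$ and $\psi(\pi,\lambda)=(-c_1-\lambda c_3,\,c_2+\lambda c_4)^{T}$ into $U_1$ and $U_2$ and simplifying, the cross terms cancelling identically. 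Thus a solution is an eigenfunction precisely when it satisfies the single remaining boundary condition, and this is exactly what the vanishing of $\Delta$ detects.

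First I would record that $\varphi(x,\lambda)$ and $\psi(x,\lambda)$ are nontrivial solutions for every $\lambda$. Indeed $\varphi(0,\lambda)=0$ would force $\lambda b_3=b_1$ and $\lambda b_4=b_2$, hence $b_1b_4-b_2b_3=k_1=0$, contradicting $k_1>0$; similarly $\psi(\pi,\lambda)\neq0$ because $k_2>0$. By uniqueness of the solution of the Cauchy problem for the first-order system (\ref{1}) — valid despite the jump of $\rho$ at $x=a$, the coefficients being integrable — a nontrivial solution cannot vanish at any point, so $\varphi(x,\lambda)\neq0$ and $\psi(x,\lambda)\neq0$ for all $x\in[0,\pi]$.

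Next comes the equivalence of the zeros of $\Delta$ with the eigenvalues. If $\Delta(\lambda_n)=0$, then $U_2(\varphi(\cdot,\lambda_n))=-\Delta(\lambda_n)=0$, and since $U_1(\varphi)\equiv0$, the function $\varphi(x,\lambda_n)$ satisfies both conditions (\ref{2}); equivalently the vector built from it lies in $D(L)$ and solves $LY=\lambda_nY$, so $\lambda_n$ is an eigenvalue and $\varphi(x,\lambda_n)$ an eigenfunction. The same argument applied to $U_1(\psi)=\Delta(\lambda_n)=0$ shows $\psi(x,\lambda_n)$ is an eigenfunction. Conversely, suppose $\lambda$ is an eigenvalue with eigenfunction $y$. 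Since $U_1(y)=(b_2-\lambda b_4)y_1(0)+(b_1-\lambda b_3)y_2(0)=0$ and this linear functional of $y(0)$ is not identically zero (its vanishing would again give $k_1=0$), the solutions satisfying the left condition form a one-dimensional space, necessarily spanned by $\varphi(\cdot,\lambda)$; hence $y=c\,\varphi(\cdot,\lambda)$ with $c\neq0$, and the second boundary condition yields $U_2(\varphi)=0$, i.e. $\Delta(\lambda)=0$.

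Finally I would establish the proportionality. Since $\Delta(\lambda_n)=W[\varphi,\psi]=\varphi_2\psi_1-\varphi_1\psi_2$ vanishes and is independent of $x$, the column vectors $\varphi(x_0,\lambda_n)$ and $\psi(x_0,\lambda_n)$ are linearly dependent at any fixed point $x_0$ (their $2\times2$ determinant being $W$); as both are nonzero there is a scalar $\beta_n$ with $\psi(x_0,\lambda_n)=\beta_n\varphi(x_0,\lambda_n)$, and $\beta_n\neq0$ because $\psi$ is nontrivial. Then $\psi(x,\lambda_n)-\beta_n\varphi(x,\lambda_n)$ is a solution of (\ref{1}) vanishing at $x_0$, so by the uniqueness just invoked it vanishes identically, giving $\psi(x,\lambda_n)=\beta_n\varphi(x,\lambda_n)$ on $[0,\pi]$. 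The step I expect to require the most care is the dimension count behind the converse — verifying that $U_1$ restricts to a nonzero functional on the two-dimensional solution space, so that $\varphi$ really spans the left-admissible solutions — since this is where the hypotheses $k_1>0$ and $k_2>0$ are essential and where a careless argument could silently assume what must be proved.
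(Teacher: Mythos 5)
Your proof is correct, and it is essentially the same (standard) argument that the paper delegates to the reference [Freiling--Yurko, Theorem~1.1.1] rather than writing out: verify $U_1(\varphi)\equiv 0$ and $U_2(\psi)\equiv 0$ from the initial data, use $k_1,k_2>0$ for nontriviality and for the one-dimensionality of the left-admissible solution space, and obtain the proportionality from the vanishing of the $x$-independent Wronskian. Your version correctly supplies the details the paper omits, so there is nothing to correct.
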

\begin{proof}
This lemma is proved by a similar way in \cite{Fy} (see Theorem 1.1.1).
\end{proof}

Norming constants are defined as follows:
\begin{equation} \label{5} 
\alpha_{n}:=\int_{0}^{\pi}\left\{\varphi _{1}^{2}(x,\lambda _{n})
+\varphi_{2}^{2}( x,\lambda _{n}) \right\}
\rho(x) dx
+\frac{1}{k_{1}}\left[b_{3}\varphi _{2}(0,\lambda_{n})+b_{4}\varphi_{1}(0,\lambda _{n}) \right]^{2} 
+\frac{1}{k_{2}}\left[c_{3}\varphi_{2}(\pi ,\lambda _{n}) 
+c_{4}\varphi _{1}(\pi ,\lambda_{n}) \right] ^{2}. 
\end{equation}
\begin{lemma} 
The following relation is valid: 
\[
\alpha _{n}\beta _{n}=\dot{\Delta}(\lambda _{n}),
\] where $\dot{\Delta}(\lambda )=\frac{d}{d\lambda }\Delta (\lambda ).$
\end{lemma}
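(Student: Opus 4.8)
The plan is to base the whole argument on a Lagrange-type identity for the system (\ref{1}). First I would take two vector solutions $u(x)$ and $v(x)$ of (\ref{1}) corresponding to spectral parameters $\lambda$ and $\mu$, and compute $\frac{d}{dx}\left(u_2 v_1-u_1 v_2\right)$ by substituting the componentwise form of (\ref{1}) (recall $B=\frac{1}{i}\sigma_1$ and $\Omega=\sigma_2 p+\sigma_3 q$). All the terms containing $p$ and $q$ cancel, leaving the clean identity
\[
\frac{d}{dx}\,W[u,v]=(\lambda-\mu)\,\rho(x)\,\bigl(u_1 v_1+u_2 v_2\bigr),\qquad W[u,v]:=u_2 v_1-u_1 v_2 .
\]
Since $u$ and $v$ are absolutely continuous on $[0,\pi]$ (only their derivatives jump at $x=a$), $W[u,v]$ is continuous there, so integrating this identity over $[0,\pi]$ is legitimate in spite of the discontinuity of $\rho$.

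Next I would specialize to $u=\varphi(x,\lambda)$ and $v=\psi(x,\lambda_n)$ and integrate from $0$ to $\pi$, obtaining $W[\varphi,\psi]\big|_0^\pi$ on the left and $(\lambda-\lambda_n)\int_0^\pi\rho\,(\varphi_1\psi_1+\varphi_2\psi_2)\,dx$ on the right. I then evaluate the two boundary Wronskians from the prescribed initial data. At $x=\pi$, inserting $\psi(\pi,\lambda_n)$ and comparing with $\Delta(\lambda)=-U_2(\varphi)$ gives $W[\varphi(\pi,\lambda),\psi(\pi,\lambda_n)]=\Delta(\lambda)+(\lambda-\lambda_n)\bigl(c_3\varphi_2(\pi,\lambda)+c_4\varphi_1(\pi,\lambda)\bigr)$. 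At $x=0$, inserting $\varphi(0,\lambda)$, comparing with $\Delta(\lambda)=U_1(\psi)$, and using $\Delta(\lambda_n)=0$ (Lemma \ref{lma}) gives $W[\varphi(0,\lambda),\psi(0,\lambda_n)]=-(\lambda-\lambda_n)\bigl(b_4\psi_1(0,\lambda_n)+b_3\psi_2(0,\lambda_n)\bigr)$. After cancelling $\Delta(\lambda_n)=0$, every surviving term carries the factor $(\lambda-\lambda_n)$, so I may divide through by $\lambda-\lambda_n$.

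Finally I would let $\lambda\to\lambda_n$. The difference quotient $\Delta(\lambda)/(\lambda-\lambda_n)$ tends to $\dot\Delta(\lambda_n)$, and the integral tends to $\int_0^\pi\rho\,(\varphi_1\psi_1+\varphi_2\psi_2)(x,\lambda_n)\,dx=\beta_n\int_0^\pi\rho\,(\varphi_1^2+\varphi_2^2)(x,\lambda_n)\,dx$ by the proportionality $\psi(x,\lambda_n)=\beta_n\varphi(x,\lambda_n)$. It then remains to identify the two leftover boundary contributions with the boundary summands of $\alpha_n$. Using $\psi(x,\lambda_n)=\beta_n\varphi(x,\lambda_n)$ together with the explicit initial vectors, one checks the algebraic collapses $b_4\varphi_1(0,\lambda_n)+b_3\varphi_2(0,\lambda_n)=-k_1$ and $c_4\varphi_1(\pi,\lambda_n)+c_3\varphi_2(\pi,\lambda_n)=-k_2/\beta_n$. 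Hence the raw boundary terms equal $-\beta_n k_1$ and $-k_2/\beta_n$, while squaring and dividing reproduce exactly the summands $\frac{1}{k_1}[\,\cdot\,]^2=k_1$ and $\frac{1}{k_2}[\,\cdot\,]^2=k_2/\beta_n^2$ in (\ref{5}). Substituting into $\dot\Delta(\lambda_n)=\beta_n\int_0^\pi\rho\,(\varphi_1^2+\varphi_2^2)+\beta_n k_1+k_2/\beta_n$ and comparing with $\alpha_n\beta_n$ matches term by term, giving $\alpha_n\beta_n=\dot\Delta(\lambda_n)$.

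The main obstacle is the careful bookkeeping of the boundary terms generated by the $\lambda$-dependent boundary conditions. Unlike the classical case, each endpoint contributes an extra term to both $\dot\Delta(\lambda_n)$ and $\alpha_n$; the heart of the argument is to verify that, after invoking $\psi=\beta_n\varphi$, the endpoint quantities $b_4\varphi_1(0)+b_3\varphi_2(0)$ and $c_4\varphi_1(\pi)+c_3\varphi_2(\pi)$ reduce to $-k_1$ and $-k_2/\beta_n$, so that the extra pieces $\beta_n k_1$ and $k_2/\beta_n$ coming from the Wronskians are precisely $\beta_n$ times the two boundary summands of $\alpha_n$. Everything else is a routine application of the Lagrange identity.
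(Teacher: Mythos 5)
Your proposal is correct and follows essentially the same route as the paper: a Lagrange identity for two solutions at parameters $\lambda$ and $\lambda_n$, integration over $[0,\pi]$, evaluation of the endpoint Wronskians against the $\lambda$-dependent boundary conditions, and the limit $\lambda\to\lambda_n$ combined with $\psi(x,\lambda_n)=\beta_n\varphi(x,\lambda_n)$. The only difference is organizational (you place the varying parameter on $\varphi$ and match the boundary pieces $-k_1$ and $-k_2/\beta_n$ to the summands of $\alpha_n$ after the limit, whereas the paper adds the $k_1,k_2$ terms to both sides beforehand so the right side is exactly the difference quotient of $\Delta$), and your endpoint identities check out.
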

\begin{proof}
Since $\varphi (x,\lambda )$ and $\psi (x,\lambda )$ are solutions of this
problem, we have 
\begin{eqnarray} \nonumber
\psi _{2}^{^{\prime }}(x,\lambda )+p(x)\psi _{1}(x,\lambda )+q(x)\psi
_{2}(x,\lambda ) &=&\lambda \rho (x)\psi _{1}(x,\lambda ), \\ \nonumber
-\psi _{1}^{^{\prime }}(x,\lambda )+q(x)\psi _{1}(x,\lambda )-p(x)\psi
_{2}(x,\lambda ) &=&\lambda \rho (x)\psi _{2}(x,\lambda ), \\ \nonumber
\varphi _{2}^{^{\prime }}(x,\lambda _{n})+p(x)\varphi _{1}(x,\lambda
_{n})+q(x)\varphi _{2}(x,\lambda _{n}) &=&\lambda _{n}\rho (x)\varphi
_{1}(x,\lambda _{n}), \\ \nonumber
-\varphi _{1}^{^{\prime }}(x,\lambda _{n})+q(x)\varphi _{1}(x,\lambda
_{n})-p(x)\varphi _{2}(x,\lambda _{n}) &=&\lambda _{n}\rho (x)\varphi
_{2}(x,\lambda _{n}). \nonumber
\end{eqnarray}
Multiplying the equations by $\varphi _{1}^{^{\prime }}(x,\lambda_{n}),
\varphi _{2}^{^{\prime}}(x,\lambda _{n}),-\psi _{1}^{^{\prime}}
(x,\lambda ),-\psi _{2}^{^{\prime }}(x,\lambda )$ respectively
and adding them together, we get
\[
\frac{d}{dx}\left\{ \varphi _{1}(x,\lambda _{n})\psi _{2}(x,\lambda )-\psi
_{1}(x,\lambda )\varphi _{2}(x,\lambda _{n})\right\}
=\left( \lambda -\lambda _{n}\right) \rho (x)\left\{ \varphi _{1}(x,\lambda
_{n})\psi _{1}(x,\lambda )+\varphi _{2}(x,\lambda _{n})\psi _{2}(x,\lambda)\right\} .
\]
Integrating it from $0$ to $\pi $,
\[
\left( \lambda -\lambda _{n}\right) \int_{0}^{\pi }\left\{ \varphi
_{1}(x,\lambda _{n})\psi _{1}(x,\lambda )+\varphi _{2}(x,\lambda _{n})\psi
_{2}(x,\lambda )\right\} \rho (x)dx=
\]
\[
=\varphi _{1}(\pi ,\lambda_{n})\psi_{2}(\pi ,\lambda)-
\varphi_{2}(\pi ,\lambda_{n})\psi_{1}(\pi ,\lambda)
-\varphi_{1}(0,\lambda_{n})\psi_{2}( 0,\lambda)+\varphi_{2}(0,\lambda_{n})\psi_{1}
( 0,\lambda)
\]is found. Now, we add 
\[
\left(\lambda-\lambda_{n}\right)\left\{\frac{1}{k_{1}}\left[ b_{3}\psi
_{2}\left( 0,\lambda \right) +b_{4}\psi _{1}\left( 0,\lambda \right)\right]
\left[ b_{3}\varphi _{2}\left( 0,\lambda_{n} \right) +b_{4}\varphi _{1}\left(
0,\lambda_{n} \right) \right]+\right.
\]
\[
\left. +\frac{1}{k_{2}}\left[ c_{3}\psi _{2}\left( \pi ,\lambda \right) +c_{4}\psi
_{1}\left( \pi ,\lambda \right) \right] \left[ c_{3}\varphi _{2}\left( \pi
,\lambda_{n} \right) +c_{4}\varphi _{1}\left( \pi ,\lambda_{n} \right) \right]\right\}
\] in the both sides of last equation and use the boundary condition (\ref{2}).
It follows that
\[
\int_{0}^{\pi }\left\{ \varphi _{1}(x,\lambda _{n})\psi _{1}(x,\lambda
)+\varphi _{2}(x,\lambda _{n})\psi _{2}(x,\lambda )\right\} \rho (x)dx+
\]
\[
+\frac{1}{k_{1}}\left[ b_{3}\psi _{2}\left( 0,\lambda \right) +b_{4}\psi _{1}\left(
0,\lambda \right) \right] \left[ b_{3}\varphi _{2}\left( 0,\lambda_{n} \right)
+b_{4}\varphi _{1}\left( 0,\lambda_{n} \right) \right]+
\]
\[
+\frac{1}{k_{2}}\left[ c_{3}\psi _{2}\left( \pi ,\lambda \right) +c_{4}\psi _{1}\left(
\pi ,\lambda \right) \right] \left[ c_{3}\varphi _{2}\left( \pi ,\lambda_{n}
\right) +c_{4}\varphi _{1}\left( \pi ,\lambda_{n} \right) \right]=\frac{%
\Delta \left( \lambda \right) -\Delta \left( \lambda _{n}\right) }{\lambda -\lambda _{n}}.
\] According to Lemma \ref{lma}, since $\psi (x,\lambda _{n})=\beta _{n}\varphi
(x,\lambda _{n})$, as $\lambda \rightarrow \lambda _{n}$, we obtain
\[
\beta _{n}\alpha _{n}=\dot{\Delta}(\lambda _{n}).
\]
\end{proof}

\section{Asymptotic Formula of Eigenvalues}
\begin{lemma} \label{lma2} 
The solution $\varphi (x,\lambda)=\left(
\begin{array}{c}
\varphi_{1}(x,\lambda) \\ 
\varphi_{2}(x,\lambda)
\end{array}
\right)$ has the following integral representation
\[
\varphi _{1}\left(x,\lambda \right)=\left( \lambda b_{3}-b_{1}\right) \cos
\lambda \mu \left( x\right) +\left( \lambda b_{4}-b_{2}\right) \sin \lambda
\mu \left( x\right)+ \ \ \ \ 
\]
\[
 \ \ \ \ \ \ \ \ \ \ \ +\left( \lambda b_{3}-b_{1}\right) \int_{0}^{\mu
\left( x\right) }\left[\tilde{A}_{11}\left( x,t\right) \cos
\lambda t+\stackrel{\approx }{A}_{12}\left( x,t\right) \sin \lambda t\right]
dt+
\]
\begin{equation}  \label{6}
 \ \ \ \ \ \ \ \ \ \ \ +\left(\lambda b_{4}-b_{2}\right) \int_{0}^{\mu \left( x\right) } 
\left[ \stackrel{\approx}{A}_{11}\left( x,t\right) \sin \lambda t-
\tilde{A}_{12}\left(x,t\right) \cos \lambda t\right] dt,  
\end{equation}

\[
\varphi _{2}\left( x,\lambda \right) =\left(\lambda b_{3}-b_{1}\right) \sin
\lambda \mu \left( x\right) +\left( b_{2}-\lambda b_{4}\right) \cos \lambda
\mu \left( x\right)+ \ \ \ \ 
\]
\[
 \ \ \ \ \ \ \ \ \ \ \ +\left( \lambda b_{3}-b_{1}\right) \int_{0}^{\mu
\left( x\right) }\left[\tilde{A}_{21}\left( x,t\right) \cos
\lambda t+\stackrel{\approx}{A}_{22}\left( x,t\right) \sin \lambda t\right]
dt+
\]
\begin{equation} \label{7}
 \ \ \ \ \ \ \ \ \ \ \ +\left( \lambda b_{4}-b_{2}\right) \int_{0}^{\mu \left( x\right)} 
\left[ \stackrel{\approx}{A}_{21}\left( x,t\right) \sin \lambda t-
\tilde{A}_{22}\left( x,t\right) \cos \lambda t\right] dt,  
\end{equation}
where 
\[
\begin{array}{c}
\tilde{A}_{1j}\left( x,t\right) =K_{1j}\left( x,-t\right)
+K_{1j}(x,t), \ \ \ \ \\ 
\stackrel{\approx }{A}_{1j}\left( x,t\right) =K_{1j}\left( x,t\right)
-K_{1j}(x,-t), \ \ \ \  \\ 
\tilde{A}_{2j}\left( x,t\right) =K_{2j}\left( x,-t\right)
+K_{2j}(x,t), \ \ \ \  \\ 
\stackrel{\approx }{A}_{2j}\left( x,t\right) =K_{2j}\left( x,t\right)
-K_{2j}(x,-t), \ \ \ \ 
\end{array}
\] and $\tilde{A}_{1j}\left( x,.\right) \in L_{2}\left( 0,\pi \right),$ $\stackrel{\approx }{A}_{1j}\left( x,.\right) \in L_{2}\left( 0,\pi
\right) ,$ $\tilde{A}_{2j}\left( x,.\right) \in L_{2}\left( 0,\pi
\right) ,$ $\stackrel{\approx }{A}_{2j}\left( x,.\right) \in L_{2}\left(
0,\pi \right) ,$ $j=1,2.$
\end{lemma}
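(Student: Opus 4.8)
The plan is to establish the integral representation (transformation operator) for the Dirac solution $\varphi(x,\lambda)$ by the standard method of successive approximations applied to the integral equation equivalent to the Cauchy problem (\ref{1}) with the given initial data at $x=0$. The discontinuity of $\rho(x)$ at $x=a$ forces the characteristic variable $\mu(x)=\int_0^x\sqrt{\rho(t)}\,dt$ (so $\mu(x)=x$ for $x\le a$ and $\mu(x)=a+\alpha^{1/2}(x-a)$ for $x>a$), which is exactly the argument appearing in the leading $\cos\lambda\mu(x)$, $\sin\lambda\mu(x)$ terms; this is the natural spectral variable that linearizes the high-frequency behavior across the turning point.

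First I would write the unperturbed solution: when $\Omega\equiv 0$, the system $By'=\lambda\rho y$ with the initial vector $(\lambda b_3-b_1,\ b_2-\lambda b_4)^{T}$ is solved explicitly, producing precisely the leading terms $(\lambda b_3-b_1)\cos\lambda\mu(x)+(\lambda b_4-b_2)\sin\lambda\mu(x)$ and its companion in the second component. Next I would treat the full system as a perturbation of this, recasting it as a Volterra integral equation $\varphi(x,\lambda)=\varphi_0(x,\lambda)+\int_0^x R(x,s)\,\Omega(s)\,\varphi(s,\lambda)\,ds$ where $R$ is the resolvent kernel of the free system. Substituting the ansatz of the form (\ref{6})--(\ref{7}), i.e. $\varphi=\varphi_0 + \int_0^{\mu(x)}K(x,t)\,(\cos\lambda t,\sin\lambda t)$-type kernels, and equating coefficients of $\cos\lambda t$, $\sin\lambda t$, I would derive a system of integral (Goursat-type) equations for the kernel matrix $K_{ij}(x,t)$. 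The symmetric and antisymmetric combinations $\tilde A_{ij}=K_{ij}(x,-t)+K_{ij}(x,t)$ and $\stackrel{\approx}{A}_{ij}=K_{ij}(x,t)-K_{ij}(x,-t)$ arise from folding the exponential kernels $e^{\pm i\lambda t}$ into even/odd parts, which is why the representation splits into $\cos$ and $\sin$ contributions.

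The core analytic step is solving the Goursat problem for $K_{ij}$ by successive approximations and proving convergence: I would show the iterates satisfy a majorant bound controlled by $\|p\|_{L_2}+\|q\|_{L_2}$, yielding a kernel whose traces $K_{ij}(x,\cdot)$ lie in $L_2(0,\pi)$. Because $p,q\in L_2$ rather than being continuous, the estimates must be carried out in the $L_2$ (Hilbert--Schmidt) setting rather than pointwise, using Cauchy--Schwarz on each Picard iterate and summing a convergent series of the form $\sum_n \bigl(\int_0^x(|p|+|q|)\bigr)^{n}/n!$; this propagates the $L_2$ membership to $\tilde A_{ij}(x,\cdot)$ and $\stackrel{\approx}{A}_{ij}(x,\cdot)$.

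The main obstacle will be handling the jump of $\rho$ at $x=a$ cleanly: the change of variables $t=\mu(x)$ is only piecewise linear, so the kernel $K(x,t)$ has a corner along the image of $x=a$, and one must verify that the representation remains valid (with the single, globally defined $\mu(x)$) across the discontinuity without the appearance of extra reflected terms beyond those already absorbed into the $\tilde A$, $\stackrel{\approx}{A}$ kernels. I would address this by splitting the interval at $a$, deriving the representation on $[0,a]$ first, then using the values $\varphi(a,\lambda)$ as initial data for the equation on $(a,\pi]$ and matching; the key is that the leading oscillatory factor is continuous in $\mu$, so the transition contributes only to the regular $L_2$ kernel and preserves the stated form. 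This mirrors the construction in the references cited for discontinuous-coefficient Dirac systems (e.g. \cite{Mam}), so I would invoke those techniques to control the matching.
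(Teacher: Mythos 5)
Your overall strategy is viable, but it contains one concrete error that would derail the computation: you take the characteristic variable to be $\mu(x)=\int_{0}^{x}\sqrt{\rho(t)}\,dt$, so that $\mu(x)=a+\alpha^{1/2}(x-a)$ for $x>a$. That is the Liouville change of variable for a \emph{second-order} Sturm--Liouville equation; for the first-order system $By'+\Omega(x)y=\lambda\rho(x)y$ the free equation $By'=\lambda\rho y$ has solution $y(x)=e^{-\lambda B\int_{0}^{x}\rho(s)\,ds}\,y(0)$ (since $B^{2}=-I$, this exponential is exactly $\cos\bigl(\lambda\int_{0}^{x}\rho\bigr)I-\sin\bigl(\lambda\int_{0}^{x}\rho\bigr)B$), so the correct phase is $\mu(x)=\int_{0}^{x}\rho(s)\,ds=\alpha x-\alpha a+a$ for $x>a$, which is what the paper uses. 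With your $\sqrt{\alpha}$ the leading terms $(\lambda b_{3}-b_{1})\cos\lambda\mu(x)+(\lambda b_{4}-b_{2})\sin\lambda\mu(x)$ would not solve the unperturbed system on $(a,\pi]$, the Goursat problem for $K_{ij}$ would acquire a nonintegrable forcing, and the representation (\ref{6})--(\ref{7}) as stated could not be recovered. The fix is local (replace $\sqrt{\rho}$ by $\rho$ throughout), but as written this step fails.

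Beyond that, your route is genuinely different from the paper's and essentially self-contained. The paper does not construct the kernel at all: it invokes the representation $E(x,\lambda)=e^{-\lambda B\mu(x)}+\int_{-\mu(x)}^{\mu(x)}K(x,t)e^{-\lambda Bt}\,dt$ from \cite{La} for the matrix solution with $E(0,\lambda)=I$, applies it to the initial vector $(\lambda b_{3}-b_{1},\,b_{2}-\lambda b_{4})^{T}$, expands the matrix exponentials into cosines and sines via $B^{2}=-I$, and folds the integral over $(-\mu(x),\mu(x))$ into $(0,\mu(x))$ to produce the even/odd combinations $\tilde{A}_{ij}$ and $\stackrel{\approx}{A}_{ij}$ --- your explanation of where those combinations come from matches this exactly. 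What your approach buys is an actual proof of existence and $L_{2}$ membership of the kernels via Picard iteration with Cauchy--Schwarz on each iterate, together with an explicit treatment of the matching at the discontinuity $x=a$; the paper outsources all of this to \cite{La}, whose stated bound $\int_{-\mu(x)}^{\mu(x)}\Vert K(x,t)\Vert\,dt\le e^{\sigma(x)}-1$ is only an $L_{1}$ estimate, so your $L_{2}$ argument is in fact more than the paper supplies. The cost is length: if the cited representation is accepted, the paper's proof reduces to a short algebraic computation.
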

\begin{proof}
To obtain the form of $\varphi (x,\lambda ),$ we use the integral representation for the solution of
equation (\ref{1}) (detail in \cite{La}). This representation is not operator
transformation and as follows: Assume that
\[
\int_{0}^{\pi}\left\|\Omega(x)\right\|dx<+\infty
\] is satisfied for Euclidean norm of matrix function $\Omega(x).$ Then the
integral representation of the solution of equation (\ref{1}) satisfying the
initial condition $Y(0)=I,$ ($I$ is unite matrix) can be represented
\[
E(x,\lambda )=e^{-\lambda B\mu (x)}+\int_{-\mu \left( x\right)
}^{\mu \left( x\right) }K(x,t)e^{-\lambda Bt}dt,
\] 
where 
\[
\mu (x)=\left\{ 
\begin{array}{c}
x,\ \ \ \ \ \ \ \ \ \ \ \ \ \ \ \ \ \ \ \ \ 0\leq x\leq a, \\ 
\alpha x-\alpha a+a,\ \ \ \ \ \ a<x\leq \pi ,
\end{array}
\right.
\] and for a kernel $K(x,t)$ the inequality 
\[
\int_{-\mu \left( x\right) }^{\mu \left( x\right) }\left\Vert
K(x,t)\right\Vert dt\leq e^{\sigma (x)}-1,
\]
\[
\sigma (x)=\int_{0}^{x}\left\Vert \Omega (s)\right\Vert ds
\] holds. Moreover, if $\Omega(x)$ is differentiable, then $K(x,t)$ satisfy
the following relations 
\[
BK_{x}+\Omega (x)K+\rho (x)K_{t}B=0,
\]
\[
\rho (x)\left[ BK(x,\mu (x))\right] =-\Omega (x),
\]
\[
BK(x,-\mu (x))=0.
\] Now, to find $\varphi(x,\lambda )$, we will use $\varphi(x,\lambda )=E(x,\lambda )\left( 
\begin{array}{c}
\lambda b_{3}-b_{1} \\ 
b_{2}-\lambda b_{4}
\end{array}
\right).$ From the expression of $E(x,\lambda)$
\begin{equation} \label{8}
 \varphi(x,\lambda)=e^{-\lambda B\mu \left( x\right)}\left( 
\begin{array}{c}
\lambda b_{3}-b_{1} \\ 
b_{2}-\lambda b_{4}
\end{array}
\right)+\int_{-\mu(x)}^{\mu(x)}K(x,t)e^{-\lambda Bt}\left( 
\begin{array}{c}
\lambda b_{3}-b_{1} \\ 
b_{2}-\lambda b_{4}
\end{array}
\right) dt  
\end{equation}
can be written. Then
\[
e^{-\lambda B\mu \left( x\right) }\left( 
\begin{array}{c}
\lambda b_{3}-b_{1} \\ 
b_{2}-\lambda b_{4}
\end{array}
\right)=\left( I-\lambda B\mu \left( x\right) +\frac{\left(
-\lambda B\mu \left( x\right) \right) ^{2}}{2!}+...\right) \left( 
\begin{array}{c}
\lambda b_{3}-b_{1} \\ 
b_{2}-\lambda b_{4}
\end{array}
\right)
\]
\[
=\left( 
\begin{array}{c}
\lambda b_{3}-b_{1} \\ 
b_{2}-\lambda b_{4}
\end{array}
\right) -\lambda \left( 
\begin{array}{c}
b_{2}-\lambda b_{4} \\ 
b_{1}-\lambda b_{3}
\end{array}
\right)\mu \left( x\right) +\frac{\lambda ^{2}}{2!}\left( 
\begin{array}{c}
b_{1}-\lambda b_{3} \\ 
\lambda b_{4}-b_{2}
\end{array}
\right) \mu ^{2}\left( x\right)+... \ \ \ \ \ \ \ \ \ \
\]
\[
=\left( 
\begin{array}{c}
\left(\lambda b_{3}-b_{1}\right) \cos \lambda \mu \left( x\right)+
\left(\lambda b_{4}-b_{2}\right) \sin \lambda \mu \left( x\right) \\ 
\left( b_{2}-\lambda b_{4}\right) \cos \lambda \mu \left( x\right) +\left(
\lambda b_{3}-b_{1}\right) \sin \lambda \mu \left( x\right)
\end{array}
\right) .\ \ \ \ \ \ \ \ \ \ \ \ \ \ \ \ \ \ \ \ \ \ \ \ \ \ \ \ \ \ \ \
\] Similar to
\[e^{-\lambda Bt}\left( 
\begin{array}{c}
\lambda b_{3}-b_{1} \\ 
b_{2}-\lambda b_{4}
\end{array}
\right) =\left( 
\begin{array}{c}
\left( \lambda b_{3}-b_{1}\right) \cos \lambda t+\left(\lambda
b_{4}-b_{2}\right) \sin \lambda t \\ 
\left( b_{2}-\lambda b_{4}\right) \cos \lambda t+\left(\lambda
b_{3}-b_{1}\right) \sin \lambda t
\end{array}
\right) .\] Putting these equalities into (\ref{8}), we obtain (\ref{6}) and (\ref{7}).
Moreover, as $\left\vert \lambda \right\vert \rightarrow \infty$ uniformly
in $x\in [ 0,\pi ]$, the following asymptotic formulas hold: 
\begin{equation}  \label{9}
\varphi _{1}(x,\lambda )=\lambda \left( b_{3}\cos \lambda \mu \left(
x\right) +b_{4}\sin \lambda \mu \left( x\right) \right) +O\left(
e^{\left\vert Im\lambda \right\vert \mu \left( x\right) }\right) ,
\end{equation}
\begin{equation} \label{10}
\varphi _{2}(x,\lambda )=\lambda \left( b_{3}\sin \lambda \mu \left(
x\right) -b_{4}\cos \lambda \mu \left( x\right) \right) +O\left(
e^{\left\vert Im \lambda \right\vert \mu \left( x\right) }\right).
\end{equation}
In fact, integrating by parts the integrals involved in (\ref{6}) and (\ref{7}) and also from 
$\left\vert \sin \lambda \mu \left( x\right) \right\vert \leq
e^{\left\vert Im\lambda \right\vert \mu \left( x\right)
}$ and $\left\vert \cos \lambda \mu \left( x\right) \right\vert
\leq e^{\left\vert Im\lambda \right\vert \mu \left(
x\right) },$ the asymptotic formulas (\ref{9}) and (\ref{10}) are found.
\end{proof}
\begin{lemma} 
The eigenvalues $\lambda _{n},(n\in\mathbb{Z})$ of the boundary value problem (\ref{1}),(\ref{2}) are in the form 
\[
\lambda _{n}=\tilde{\lambda}_{n}+\epsilon _{n},
\] where 
\[\tilde{\lambda}_{n}=\left[ n+\frac{1}{\pi }\arctan \left(\frac{
c_{3}b_{4}-c_{4}b_{3}}{b_{3}c_{3}+c_{4}b_{4}}\right)\right] \frac{\pi }{\mu \left(
\pi \right) }\]
and $\{\epsilon_{n}\}\in l_{2}$. Moreover, for the large $n$, the eigenvalues are simple.
\end{lemma}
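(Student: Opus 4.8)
The plan is to realize the eigenvalues as the zeros of $\Delta(\lambda)$ and to compare them, via Rouch\'e's theorem, with the zeros of an explicit leading term of $\Delta$. First I would start from the representation $\Delta(\lambda)=-U_{2}(\varphi)=-c_{1}\varphi_{2}(\pi,\lambda)-c_{2}\varphi_{1}(\pi,\lambda)-\lambda\bigl(c_{3}\varphi_{2}(\pi,\lambda)+c_{4}\varphi_{1}(\pi,\lambda)\bigr)$ and substitute the asymptotics (\ref{9}), (\ref{10}) evaluated at $x=\pi$. The term $-\lambda(c_{3}\varphi_{2}+c_{4}\varphi_{1})$ produces the dominant contribution of order $\lambda^{2}$, while $-c_{1}\varphi_{2}-c_{2}\varphi_{1}$ contributes only $O(\lambda e^{|\operatorname{Im}\lambda|\mu(\pi)})$. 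Collecting terms and abbreviating $\kappa=b_{3}c_{3}+b_{4}c_{4}$ and $\tau=c_{4}b_{3}-c_{3}b_{4}$, this yields
\[
\Delta(\lambda)=-\lambda^{2}\bigl[\kappa\sin\lambda\mu(\pi)+\tau\cos\lambda\mu(\pi)\bigr]+O\!\left(\lambda e^{|\operatorname{Im}\lambda|\mu(\pi)}\right).
\]
Writing $\kappa\sin\theta+\tau\cos\theta=\sqrt{\kappa^{2}+\tau^{2}}\,\sin(\theta+\delta)$ with $\delta=\arctan(\tau/\kappa)$, the zeros of the bracketed leading factor occur exactly at $\lambda\mu(\pi)=n\pi-\delta$, that is, at the numbers $\tilde{\lambda}_{n}$ of the statement, since $-\delta=\arctan\bigl((c_{3}b_{4}-c_{4}b_{3})/\kappa\bigr)$.

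Next I would localize the true zeros. On circles $|\lambda-\tilde{\lambda}_{n}|=r$ with a fixed $0<r<\pi/(2\mu(\pi))$, the factor $\kappa\sin\lambda\mu(\pi)+\tau\cos\lambda\mu(\pi)$ stays a fixed positive distance from its zeros, so it is bounded below by a positive constant there, and after multiplication by $\lambda^{2}\sim\tilde{\lambda}_{n}^{2}$ the main term is $\gtrsim\tilde{\lambda}_{n}^{2}$ while the remainder is only $O(\tilde{\lambda}_{n})$. Hence for all large $|n|$ the remainder is strictly smaller on the circle, and Rouch\'e's theorem gives exactly one zero $\lambda_{n}$ inside each disc; counting zeros inside expanding contours $|\lambda|=(N+\tfrac12)\pi/\mu(\pi)$ confirms that these exhaust the spectrum. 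This already gives $\epsilon_{n}:=\lambda_{n}-\tilde{\lambda}_{n}\to0$ and the enumeration by $n\in\mathbb{Z}$.

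The delicate point, where I expect the main work, is upgrading $\epsilon_{n}\to0$ to $\{\epsilon_{n}\}\in\ell_{2}$. I would substitute $\lambda_{n}=\tilde{\lambda}_{n}+\epsilon_{n}$ into $\Delta(\lambda_{n})=0$ and expand $\sin(\lambda_{n}\mu(\pi)+\delta)=\sin(n\pi+\epsilon_{n}\mu(\pi))=(-1)^{n}\sin(\epsilon_{n}\mu(\pi))\sim(-1)^{n}\mu(\pi)\epsilon_{n}$. Dividing $\Delta(\lambda_{n})=0$ by $\lambda_{n}^{2}$ then expresses $\epsilon_{n}$ as $O(1/\lambda_{n})$ times the lower-order terms evaluated at $\lambda_{n}$. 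After integration by parts in (\ref{6}),(\ref{7}), these lower-order terms are linear combinations of Fourier-type integrals $\int_{0}^{\mu(\pi)}\tilde{A}_{ij}(\pi,t)\cos\lambda_{n}t\,dt$ and $\int_{0}^{\mu(\pi)}\stackrel{\approx}{A}_{ij}(\pi,t)\sin\lambda_{n}t\,dt$ with kernels in $L_{2}(0,\mu(\pi))$. Since the frequencies $\tilde{\lambda}_{n}$ form a shifted uniform lattice of spacing $\pi/\mu(\pi)$, the systems $\{\cos\tilde{\lambda}_{n}t\}$ and $\{\sin\tilde{\lambda}_{n}t\}$ are Bessel sequences in $L_{2}(0,\mu(\pi))$, so the sampled integrals form $\ell_{2}$ sequences by Bessel's inequality; replacing $\lambda_{n}$ by $\tilde{\lambda}_{n}$ inside costs only an additional $o(1)\cdot\ell_{2}$ term because $\epsilon_{n}\to0$. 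This delivers $\{\epsilon_{n}\}\in\ell_{2}$.

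Finally, for simplicity I would differentiate the asymptotic identity for $\Delta$. The derivative of the leading factor at $\tilde{\lambda}_{n}$ equals $(-1)^{n}\mu(\pi)\sqrt{\kappa^{2}+\tau^{2}}\neq0$, so $\dot{\Delta}(\lambda_{n})\sim\pm\lambda_{n}^{2}\mu(\pi)\sqrt{\kappa^{2}+\tau^{2}}\neq0$ for all large $|n|$. Thus each such $\lambda_{n}$ is a simple zero of $\Delta$, and by Lemma \ref{lma} a simple eigenvalue of the problem (\ref{1}),(\ref{2}).
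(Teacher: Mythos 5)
Your argument is correct and follows essentially the same route as the paper: the asymptotic representation $\Delta(\lambda)=\lambda^{2}\chi(\lambda)+O\left(|\lambda|e^{|\mathrm{Im}\,\lambda|\mu(\pi)}\right)$ with the same leading trigonometric factor, Rouch\'e's theorem on expanding contours and on small circles about $\tilde{\lambda}_{n}$, and back-substitution of $\lambda_{n}=\tilde{\lambda}_{n}+\epsilon_{n}$ to get $\sin(\epsilon_{n}\mu(\pi))=O(1/n)$, which already gives $\{\epsilon_{n}\}\in l_{2}$ without your additional Bessel-sequence refinement. The one genuine divergence is the simplicity claim: the paper deduces $\dot{\Delta}(\lambda_{n})\neq 0$ from the identity $\alpha_{n}\beta_{n}=\dot{\Delta}(\lambda_{n})$ together with $\alpha_{n}\neq 0$, $\beta_{n}\neq 0$, whereas you differentiate the asymptotic formula for $\Delta$; your route also works for large $|n|$, but it needs the (standard, omitted) justification via Cauchy estimates that the $O$-term may be differentiated.
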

\begin{proof}
Substituting asymptotic formulas (\ref{9}) and (\ref{10}) into the expression (\ref{4}), we have 
\begin{equation} \label{11}
\Delta \left( \lambda \right) =\lambda^{2}\chi \left( \lambda \right)
+O\left( \left|\lambda\right| e^{\left\vert Im\lambda \right\vert \mu \left(\pi\right) }\right) ,  
\end{equation}
where 
\[\chi \left( \lambda \right) =c_{3}b_{4}\cos \lambda \mu \left( \pi
\right) -b_{3}c_{3}\sin \lambda \mu \left( \pi \right) -c_{4}b_{3}\cos
\lambda \mu \left( \pi \right) -b_{4}c_{4}\sin \lambda \mu \left( \pi \right).
\] Denote 
\[
G_{\delta }:=\left\{ \lambda :\left\vert \lambda -\tilde{\lambda}
_{n}\right\vert \geq \delta ,\ \ n=0,\pm 1,\pm 2...\right\} ,
\] where $\delta $ is a sufficiently small positive number. For $\lambda \in G_{\delta }$, 
\begin{equation}\label{a}
\left\vert \chi \left( \lambda \right) \right\vert \geq C_{\delta }exp\left(
\left\vert Im\lambda \right\vert \mu (\pi )\right)
\end{equation} 
is valid, where $C_{\delta }$ is a positive number. This inequality is similarly obtained as in (\cite{Mar}, Lemma 1.3.2). On the other hand, there
exists a constant $C>0$ such that 
\begin{equation}\label{b}
\left\vert \Delta (\lambda )-\lambda^{2}\chi \left( \lambda \right)
\right\vert \leq C\left\vert \lambda \right\vert e^{\left\vert Im\lambda
\right\vert \mu (\pi)}.
\end{equation} Therefore on infinitely expanding contours 
\[
\Gamma _{n}:=\left\{ \lambda :\left\vert \lambda \right\vert =\tilde{\lambda}_{n}+\frac{\pi }{2\mu (\pi )},\ \ n=0,\pm 1,\pm 2,...\right\} ,
\] for sufficiently large $n$, using (\ref{a}) and (\ref{b}) we get 
\[
\left\vert \Delta (\lambda )-\lambda ^{2}\chi \left( \lambda \right)
\right\vert <\left\vert \lambda \right\vert ^{2}\left\vert \chi \left(
\lambda \right) \right\vert ,\ \ \lambda \in \Gamma _{n}.
\] Applying the Rouche theorem, it is obtained that the number of zeros of the
function $\left\{\Delta (\lambda)-\lambda ^{2}\chi \left( \lambda \right)
\right\} +\lambda ^{2}\chi \left( \lambda \right) =\Delta (\lambda )$ inside
the counter $\Gamma _{n}$ coincides with the number of zeros of function $
\lambda ^{2}\chi \left( \lambda \right).$ Moreover, using the Rouche
theorem, there exist only one zero $\lambda _{n}$ of the function $\Delta
(\lambda )$ in the circle $\gamma _{n}(\delta)=\left\{\lambda :\left\vert
\lambda -\tilde{\lambda}_{n}\right\vert <\delta \right\} $ is
concluded. Since $\delta >0$ is arbitrary, we have 
\begin{equation}  \label{12}
\lambda _{n}=\left[ n+\frac{1}{\pi }\arctan \left(\frac{c_{3}b_{4}-c_{4}b_{3}}{
b_{3}c_{3}+c_{4}b_{4}}\right)\right] \frac{\pi}{\mu \left( \pi \right) }+\epsilon
_{n},\ \lim_{n\rightarrow \pm \infty }\epsilon _{n}=0.  
\end{equation}
Substituting (\ref{12}) into (\ref{11}), we get  
$sin\epsilon _{n}\mu (\pi )=O(\frac{1}{n})$. It follows that $\epsilon _{n}=O(\frac{1}{n})$.
Thus $\epsilon _{n}\in l_{2}$ is found. For the large $n$, the eigenvalues are
simple. In fact, since $\alpha _{n}\beta _{n}=\dot{\Delta}(\lambda _{n})$
and $\alpha _{n}\neq 0,$ $\beta _{n}\neq 0,$ we get $\dot{\Delta}(\lambda
_{n})\neq 0.$ 
\end{proof}

\section{Completeness Theorem}
Firstly, we construct resolvent operator and then we prove completeness theorem. 
The expansion formula respect to eigenfunction is obtained and Parseval equality is given.  
\begin{lemma}
If $\lambda$ is not a spectrum point of operator $L$, then the resolvent
operator exists and has the following form 
\begin{equation}\label{15}
y(x,\lambda )=\int_{0}^{\pi }R_{\lambda }(x,t)f(t)\rho (t)dt+\frac{f_{4}}{
\Delta (\lambda )}\varphi (x,\lambda )+\frac{f_{3}}{\Delta (\lambda )}\psi(x,\lambda ),  
\end{equation}
where 
\begin{equation} \label{16}
R_{\lambda}(x,t)=-\frac{1}{\Delta (\lambda)}\left\{ 
\begin{array}{rl}
\psi (x,\lambda )\widetilde{\varphi }(t,\lambda ),\ \ \  & t\leq x, \\ 
\varphi (x,\lambda )\widetilde{\psi }(t,\lambda ),\ \ \  & t\geq x,
\end{array}
\right.  
\end{equation}
here $\widetilde{\varphi }(t,\lambda )$ denotes the transposed vector
function of $\varphi (t,\lambda ).$
\end{lemma}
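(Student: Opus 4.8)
The plan is to realize the resolvent as the unique solution of the operator equation $(L-\lambda I)Y=F$ for an arbitrary $F=(f_{1},f_{2},f_{3},f_{4})^{T}\in H_{\rho}$, read componentwise. Writing $f=(f_{1},f_{2})^{T}$ and using the domain constraints $y_{3}=b_{3}y_{2}(0)+b_{4}y_{1}(0)$, $y_{4}=c_{3}y_{2}(\pi)+c_{4}y_{1}(\pi)$, the first two components of $(L-\lambda I)Y=F$ reduce to the inhomogeneous Dirac equation
\[
By^{\prime}+\Omega(x)y-\lambda\rho(x)y=\rho(x)f,
\]
while the third and fourth components become the inhomogeneous boundary conditions $U_{1}(y)=f_{3}$ and $U_{2}(y)=-f_{4}$. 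Thus it suffices to produce the $y\in AC[0,\pi]$ solving this system and to identify it with the right-hand side of (\ref{15}).

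First I would construct a particular solution of the inhomogeneous equation by variation of parameters built on the fundamental pair $\varphi,\psi$. Setting
\[
y_{p}(x)=\int_{0}^{\pi}R_{\lambda}(x,t)f(t)\rho(t)\,dt=-\frac{1}{\Delta(\lambda)}\left[\psi(x)\int_{0}^{x}\widetilde{\varphi}(t)f(t)\rho(t)\,dt+\varphi(x)\int_{x}^{\pi}\widetilde{\psi}(t)f(t)\rho(t)\,dt\right],
\]
I would differentiate and observe that the two terms coming from the variable limits combine into $\left[\psi(x)\widetilde{\varphi}(x)-\varphi(x)\widetilde{\psi}(x)\right]f(x)\rho(x)$. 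The key algebraic step is the identity
\[
\psi(x)\widetilde{\varphi}(x)-\varphi(x)\widetilde{\psi}(x)=\Delta(\lambda)B,
\]
which follows at once from the definition of $\Delta$ in (\ref{4}) together with $B=\left(\begin{smallmatrix}0&1\\-1&0\end{smallmatrix}\right)$. Substituting $B\varphi^{\prime}=(\lambda\rho-\Omega)\varphi$, $B\psi^{\prime}=(\lambda\rho-\Omega)\psi$ and using $B^{2}=-I$, the differentiated expression collapses to $(\lambda\rho-\Omega)y_{p}+\rho f$, which shows that $y_{p}$ solves $By_{p}^{\prime}+\Omega y_{p}-\lambda\rho y_{p}=\rho f$ exactly.

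It then remains to correct $y_{p}$ by homogeneous solutions so as to meet the two boundary conditions, and I would do this by adding $\dfrac{f_{4}}{\Delta(\lambda)}\varphi+\dfrac{f_{3}}{\Delta(\lambda)}\psi$. At $x=0$ the first integral in $y_{p}$ vanishes, so $y_{p}(0)$ is proportional to $\varphi(0)$ and hence $U_{1}(y_{p})=0$ because $U_{1}(\varphi)=0$ by the choice of initial data for $\varphi$; likewise $U_{2}(y_{p})=0$ since the second integral vanishes at $x=\pi$ and $U_{2}(\psi)=0$. Consequently $U_{1}(y)=\frac{f_{3}}{\Delta}U_{1}(\psi)$ and $U_{2}(y)=\frac{f_{4}}{\Delta}U_{2}(\varphi)$, and invoking the two representations $\Delta=U_{1}(\psi)=-U_{2}(\varphi)$ recorded after (\ref{4}) yields precisely $U_{1}(y)=f_{3}$ and $U_{2}(y)=-f_{4}$, as required; defining $y_{3},y_{4}$ through the domain constraints places $y$ in $D(L)$.

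The hypothesis that $\lambda$ is not a spectral point enters exactly through $\Delta(\lambda)\neq0$ (Lemma \ref{lma}), which makes $\varphi,\psi$ linearly independent, renders $R_{\lambda}$ well defined, and forces the added multiples of $\varphi,\psi$ to be the unique ones closing the boundary conditions. I expect the main obstacle to be the variation-of-parameters computation producing $y_{p}$: one must carefully track the jump generated by the split kernel $R_{\lambda}$ and verify $\psi\widetilde{\varphi}-\varphi\widetilde{\psi}=\Delta B$, so that multiplication by $B$ regenerates the source $\rho f$. By contrast, the discontinuity of $\rho$ at $x=a$ causes no difficulty, since the integral representations of Lemma \ref{lma2} keep $\varphi,\psi$ absolutely continuous there and hence $y\in AC[0,\pi]$.
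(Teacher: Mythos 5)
Your proposal is correct and follows essentially the same route as the paper: reduce $(L-\lambda I)Y=F$ to the inhomogeneous system (\ref{17}) with boundary conditions (\ref{18}) and solve it by variation of parameters on the pair $\varphi,\psi$, which the paper does in compressed form. Your added details --- the identity $\psi\widetilde{\varphi}-\varphi\widetilde{\psi}=\Delta(\lambda)B$ together with $B^{2}=-I$ to recover the source term, and the observations $U_{1}(\varphi)=U_{2}(\psi)=0$, $\Delta=U_{1}(\psi)=-U_{2}(\varphi)$ to fix the homogeneous correction --- are exactly the computations the paper leaves implicit, and they check out.
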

\begin{proof}
Let $F(x)=\left( 
\begin{array}{c}
f\left( x\right) \\ 
f_{3} \\ 
f_{4}
\end{array}
\right) \in D(L),$ $f(x)=\left( 
\begin{array}{c}
f_{1}(x) \\ 
f_{2}(x)
\end{array}
\right).$ To construct the resolvent operator of $L$, we solve the
following problem
\begin{equation}\label{17}
By^{\prime}+\Omega \left( x\right) y=\lambda \rho \left( x\right) y+\rho
\left( x\right) f\left( x\right)  
\end{equation}
\begin{equation}\label{18}
\begin{array}{cc}
b_{1}y_{2}\left( 0\right) +b_{2}y_{1}\left( 0\right) -\lambda
\left( b_{3}y_{2}\left( 0\right) +b_{4}y_{1}\left( 0\right) \right) =f_{3}, \\
\\
c_{1}y_{2}\left( \pi \right) +c_{2}y_{1}\left( \pi \right) 
+\lambda \left( c_{3}y_{2}\left( \pi \right) +c_{4}y_{1}\left( \pi \right)
\right)=-f_{4}. 
\end{array}
\end{equation}
The solution of this problem which has a form
\[
y(x,\lambda)=c_{1}(x,\lambda)\varphi(x,\lambda)+C_{2}(x,\lambda)\psi(x,\lambda)
\] is found by applying the method of variation of parameters. Hence (\ref{15}) and (\ref{16}) is obtained.  
\end{proof}
\begin{theorem}
The system of the eigenfunctions $\varphi (x,\lambda _{n}),(n\in 
\mathbb{Z})$ of boundary value problem (\ref{1}),(\ref{2}) is complete in $
L_{2,\rho }(0,\pi;\mathbb{C}^{2})\oplus \mathbb{C}^{2}$.
\end{theorem}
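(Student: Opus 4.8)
The plan is to argue by contradiction using the resolvent representation (\ref{15})--(\ref{16}) together with a Cauchy/maximum-modulus argument over the expanding contours $\Gamma_{n}$. Suppose the eigenfunction system is \emph{not} complete in $H:=L_{2,\rho}(0,\pi;\mathbb{C}^{2})\oplus\mathbb{C}^{2}$. Then there is a nonzero element $F=(f,f_{3},f_{4})^{T}\in H$ which is orthogonal, in the inner product (\ref{3}), to every eigenvector $\Phi_{n}$ of $L$ associated with $\varphi(x,\lambda_{n})$; recall from (\ref{5}) that $\alpha_{n}=\langle\Phi_{n},\Phi_{n}\rangle$, so $\alpha_{n}\neq0$. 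The goal is to deduce $F=0$, a contradiction.

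First I would form the $H$-valued function $Y(\cdot,\lambda)=R_{\lambda}F$ given by (\ref{15}). Since $\varphi(x,\lambda)$ and $\psi(x,\lambda)$ are entire in $\lambda$, the only $\lambda$-singularities of $Y$ come from the scalar factor $1/\Delta(\lambda)$ in (\ref{15}) and (\ref{16}); hence $\lambda\mapsto Y(\cdot,\lambda)$ is meromorphic with poles only at the eigenvalues $\lambda_{n}$, i.e.\ the zeros of $\Delta$. Next I would compute the residue of $Y$ at each $\lambda_{n}$. For all but finitely many $n$ the zero is simple, so $\mathrm{Res}_{\lambda=\lambda_{n}}1/\Delta(\lambda)=1/\dot{\Delta}(\lambda_{n})=1/(\alpha_{n}\beta_{n})\neq0$ by the Lemma giving $\alpha_{n}\beta_{n}=\dot{\Delta}(\lambda_{n})$; using the proportionality $\psi(x,\lambda_{n})=\beta_{n}\varphi(x,\lambda_{n})$ from Lemma~\ref{lma}, the kernel (\ref{16}) at $\lambda_{n}$ collapses and the residue reduces to a scalar multiple of $\langle F,\Phi_{n}\rangle\,\varphi(x,\lambda_{n})$. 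The orthogonality hypothesis forces every such residue to vanish, so $Y(\cdot,\lambda)$ extends to an \emph{entire} $H$-valued function of $\lambda$ (the finitely many possibly multiple eigenvalues are treated the same way, after noting $F$ is taken orthogonal to the corresponding root functions).

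The main obstacle is the growth estimate of $\|Y(\cdot,\lambda)\|$ for large $|\lambda|$. Here I would insert the asymptotics (\ref{9})--(\ref{10}) for $\varphi$ (and the analogous formulas for $\psi$) into (\ref{15})--(\ref{16}), and combine them with the lower bound (\ref{a}), which gives $|\Delta(\lambda)|\ge C|\lambda|^{2}\exp\!\left(|\mathrm{Im}\,\lambda|\,\mu(\pi)\right)$ on the region $G_{\delta}$ and in particular on the contours $\Gamma_{n}$. The delicate point is to track the exponential factors carefully on both branches of the Green kernel: for $t\le x$ the numerator carries $\exp\!\left(|\mathrm{Im}\,\lambda|(\mu(x)+\mu(t))\right)$ from $\psi(x,\lambda)\widetilde{\varphi}(t,\lambda)$ and for $t\ge x$ the factor $\exp\!\left(|\mathrm{Im}\,\lambda|(\mu(x)+(\mu(\pi)-\mu(t)))\right)$ from $\varphi(x,\lambda)\widetilde{\psi}(t,\lambda)$; in either case the total exponent never exceeds $|\mathrm{Im}\,\lambda|\,\mu(\pi)$, which is exactly cancelled by the denominator, while the surplus powers of $\lambda$ in $\Delta$ beat the $\lambda$-growth of the numerator. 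Integrating against $f\in L_{2,\rho}$ and adding the two rank-one terms $f_{4}\varphi/\Delta$ and $f_{3}\psi/\Delta$, one obtains $\|Y(\cdot,\lambda)\|\to0$ uniformly as $|\lambda|\to\infty$ along $\Gamma_{n}$.

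Finally, for each fixed $G\in H$ the scalar function $h_{G}(\lambda):=\langle Y(\cdot,\lambda),G\rangle$ is entire and, by the previous step, tends to $0$ on the circles $\Gamma_{n}$ whose radii increase to infinity; the maximum-modulus principle then yields $h_{G}\equiv0$, and since $G$ is arbitrary we get $Y(\cdot,\lambda)\equiv0$. Because $Y=R_{\lambda}F=(L-\lambda I)^{-1}F$, applying $L-\lambda I$ gives $F=0$, contradicting $F\neq0$. Therefore no nonzero element of $H$ is orthogonal to all the $\varphi(x,\lambda_{n})$, and the system $\{\varphi(x,\lambda_{n})\}_{n\in\mathbb{Z}}$ is complete in $L_{2,\rho}(0,\pi;\mathbb{C}^{2})\oplus\mathbb{C}^{2}$.
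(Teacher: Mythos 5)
Your argument is essentially the paper's own proof: form $y(x,\lambda)=R_{\lambda}F$ from (\ref{15})--(\ref{16}), use $\psi(x,\lambda_{n})=\beta_{n}\varphi(x,\lambda_{n})$ and $\alpha_{n}\beta_{n}=\dot{\Delta}(\lambda_{n})$ to see that orthogonality of $F$ to the eigenvectors kills every residue so that $y$ is entire in $\lambda$, then combine the lower bound $|\Delta(\lambda)|\geq C_{\delta}|\lambda|^{2}\exp\left(|\mathrm{Im}\,\lambda|\,\mu(\pi)\right)$ with the Riemann--Lebesgue type estimates (\ref{20})--(\ref{21}) to get $y\to 0$ and conclude $y\equiv 0$, hence $F=0$, by Liouville. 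One bookkeeping slip: on the branch $t\leq x$ the numerator $\psi(x,\lambda)\widetilde{\varphi}(t,\lambda)$ grows like $\exp\left(|\mathrm{Im}\,\lambda|\,(\mu(\pi)-\mu(x)+\mu(t))\right)$, not $\exp\left(|\mathrm{Im}\,\lambda|\,(\mu(x)+\mu(t))\right)$ as you wrote --- with your exponent the claimed bound by $\exp\left(|\mathrm{Im}\,\lambda|\,\mu(\pi)\right)$ would fail for $x$ near $\pi$, whereas with the correct one it holds because $\mu(t)\leq\mu(x)$ there.
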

\begin{proof}
Using (\ref{15}) and (\ref{16}) and the equality 
$\psi (x,\lambda _{n})=\frac{\dot{\Delta}(\lambda _{n})}{\alpha _{n}}\varphi
(x,\lambda _{n}),$ (in Lemma \ref{lma}), we get
\begin{equation}  \label{19}
\underset{\lambda =\lambda _{n}}{Res}\ y\left( x,\lambda \right)=
-\frac{1}{\alpha _{n}}\varphi (x,\lambda _{n})\left\{
\int_{0}^{\pi }\widetilde{\varphi }(x,\lambda _{n})f\left( t\right)
\rho \left( t\right) dt-\frac{f_{4}}{\beta _{n}}-f_{3}\right\} .  
\end{equation}
Let $F(x)\in L_{2,\rho}(0,\pi;\mathbb{C}^{2})\oplus \mathbb{C}^{2}$ be such that 
\[
\left\langle F(x),\varphi (x,\lambda _{n})\right\rangle=\int_{0}^{\pi }
\widetilde{\varphi }(t,\lambda _{n})f(t)\rho (t)dt+\frac{1}{k_{1}}f_{3}\left[
b_{3}\varphi _{2}\left( 0,\lambda _{n}\right) +b_{4}\varphi _{1}\left(
0,\lambda _{n}\right) \right]+ 
\]
\[+\frac{1}{k_{2}}f_{4}\left[ c_{3}\varphi _{2}\left( \pi ,\lambda
_{n}\right) +c_{4}\varphi _{1}\left( \pi ,\lambda _{n}\right) \right] =0. 
\]
It follows from the boundary conditions (\ref{2}) and Lemma \ref{lma}, 
\[b_{3}\varphi _{2}\left( 0,\lambda _{n}\right) +b_{4}\varphi _{1}\left(
0,\lambda _{n}\right) =-k_{1}\] and
\[c_{3}\varphi _{2}\left( \pi ,\lambda
_{n}\right) +c_{4}\varphi _{1}\left( \pi ,\lambda _{n}\right) =-\frac{k_{2}}{\beta _{n}}.\] Thus, 
\[
\left\langle F(x),\varphi (x,\lambda _{n})\right\rangle =\int_{0}^{\pi }
\widetilde{\varphi }(t,\lambda _{n})f(t)\rho (t)dt-f_{3}-\frac{f_{4}}{\beta_{n}}=0
\] is found. From here and (\ref{19}), $Res_{\lambda =\lambda _{n}}y\left( x,\lambda \right) =0$ is obtained. 
Hence, $y(x,\lambda )$ is entire
function with respect to $\lambda $ for each fixed $x\in \left[ 0,\pi \right].$ Taking into account 
\begin{equation} \label{d}
\left|\Delta (\lambda)\right|\geq \left\vert 
\lambda \right\vert ^{2}C_{\delta }\exp(\left\vert Im\lambda \right\vert \mu (\pi))
\end{equation} 
and the following equalities being valid according to (\cite{Mar}, Lemma 1.3.1)
\begin{equation} \label{20}
\lim_{\left\vert \lambda \right\vert \rightarrow \infty }\max_{0\leq x\leq
\pi }\frac{1}{\left\vert \lambda \right\vert }\exp \left(-\left\vert Im\lambda \right\vert
\mu (x)\right)\left\vert \int_{0}^{x}\widetilde{\varphi }(t,\lambda
)f(t)\rho (t)dt\right\vert =0,  
\end{equation}
\begin{equation}  \label{21}
\lim_{\left\vert \lambda \right\vert \rightarrow \infty }\max_{0\leq x\leq
\pi }\frac{1}{\left\vert \lambda \right\vert }\exp \left(-\left\vert Im\lambda \right\vert
\left( \mu (\pi )-\mu (x)\right) \right)\left\vert \int_{x}^{\pi }
\widetilde{\psi }(t,\lambda )f(t)\rho (t)dt\right\vert =0,  
\end{equation}
we have 
\[
\lim_{\left\vert \lambda \right\vert \rightarrow \infty }\max_{0\leq x\leq
\pi }\left\vert y(x,\lambda )\right\vert =0.
\] Consequently, $y(x,\lambda )\equiv 0$. From (\ref{17}) and (\ref{18}), $F(x)=0$ a.e.
on $(0,\pi )$ is obtained.
\end{proof}
\begin{theorem} 
Let $F\left( x\right) \in D(L).$ Then the following expansion formula holds:
\begin{equation}  \label{22}
f(x)=\sum_{n=-\infty }^{\infty }a_{n}\varphi (x,\lambda _{n}),  
\end{equation}
\[
f_{3}=\sum_{n=-\infty }^{\infty }a_{n}\left[ b_{3}\varphi _{2}\left(
0,\lambda _{n}\right) +b_{4}\varphi _{1}\left( 0,\lambda _{n}\right) \right],
\]
\[
f_{4}=\sum_{n=-\infty }^{\infty }a_{n}\left[ b_{3}\varphi _{2}\left( \pi
,\lambda _{n}\right) +b_{4}\varphi _{1}\left( \pi ,\lambda _{n}\right) 
\right] ,
\] where
\[
a_{n}=\frac{1}{\alpha _{n}}\left\langle f(x),\varphi (x,\lambda
_{n})\right\rangle .
\] The series converges uniformly with respect to $x\in \left[ 0,\pi \right] $.
The series (\ref{22}) converges in $L_{2,\rho}(0,\pi;\mathbb{C}^{2})$ for $f(x)\in L_{2,\rho}(0,\pi;\mathbb{C}^{2})$ and Parseval equality
\begin{equation}  \label{23}
\left\Vert f\right\Vert ^{2}=\sum_{n=-\infty }^{\infty }\alpha
_{n}\left\vert a_{n}\right\vert ^{2}  
\end{equation}
is valid.
\end{theorem}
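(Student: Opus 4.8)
The plan is to evaluate a single contour integral of the resolvent in two different ways and to match the outcomes. Fix $F\in D(L)$ and $x\in[0,\pi]$, and set
\[
S_N(x)=\frac{1}{2\pi i}\oint_{\Gamma_N}y(x,\lambda)\,d\lambda,
\]
where $y(x,\lambda)=R_\lambda F$ is the resolvent (\ref{15})--(\ref{16}) and $\Gamma_N$ are the expanding circles of Section 3. The function $y(x,\lambda)$ is meromorphic in $\lambda$ with simple poles at the eigenvalues $\lambda_n$, so by the residue theorem $S_N(x)=\sum_{|n|\le N}\operatorname{Res}_{\lambda=\lambda_n}y(x,\lambda)$. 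The residues were computed in (\ref{19}); combining that formula with the identity $\langle F,\varphi(\cdot,\lambda_n)\rangle=\int_0^\pi\widetilde\varphi(t,\lambda_n)f(t)\rho(t)\,dt-f_3-f_4/\beta_n$ derived in the proof of the completeness theorem, each residue equals $-a_n\varphi(x,\lambda_n)$ with $a_n=\alpha_n^{-1}\langle F,\varphi(\cdot,\lambda_n)\rangle$. Hence
\[
S_N(x)=-\sum_{|n|\le N}a_n\varphi(x,\lambda_n),
\]
the negative of the $N$-th partial sum of the series in (\ref{22}).

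The substantive step is the direct, analytic evaluation of the same integral, namely to prove $S_N(x)\to -f(x)$ uniformly on $[0,\pi]$. I would insert (\ref{16}) into (\ref{15}) and substitute the asymptotic representations (\ref{9})--(\ref{10}) for $\varphi$ (and their analogues for $\psi$) together with the lower bound (\ref{d}) for $\Delta$. The leading part of the Green's function $-\Delta(\lambda)^{-1}\psi(x,\lambda)\int_0^x\widetilde\varphi f\rho\,dt-\Delta(\lambda)^{-1}\varphi(x,\lambda)\int_x^\pi\widetilde\psi f\rho\,dt$ then reduces, on the circles $\Gamma_N$, to a classical Dirichlet-kernel integral in the variable $\mu(x)$ with limit $-f(x)$, while the kernel remainders and the boundary terms $\Delta(\lambda)^{-1}f_4\varphi$ and $\Delta(\lambda)^{-1}f_3\psi$ are negligible in the limit by (\ref{d}) and the uniform relations (\ref{20})--(\ref{21}). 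Matching this with the residue evaluation gives the uniformly convergent expansion (\ref{22}); the companion series for $f_3$ and $f_4$ are obtained by reading off the third and fourth components of the residue (which equals $-a_n$ times the full eigenvector) and inserting the boundary relations $b_3\varphi_2(0,\lambda_n)+b_4\varphi_1(0,\lambda_n)=-k_1$ and $c_3\varphi_2(\pi,\lambda_n)+c_4\varphi_1(\pi,\lambda_n)=-k_2/\beta_n$ recorded earlier.

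I expect the main obstacle to be exactly this direct estimate, and in particular its uniformity up to the endpoints $x=0,\pi$. Away from the endpoints the boundary terms decay like $O(|\lambda|^{-1})$ on $\Gamma_N$ and integrate to zero, but near the endpoints their control, and the control of the error made by replacing $\varphi,\psi,\Delta$ with their leading asymptotics, is precisely where the hypothesis $F\in D(L)$ enters: absolute continuity of $y_1,y_2$ and $l(y)\in L_{2,\rho}$ supply the extra decay — obtainable by integration by parts in (\ref{20})--(\ref{21}), or via the resolvent identity $R_\lambda F=(\lambda-\lambda_0)^{-1}(R_\lambda-R_{\lambda_0})(L-\lambda_0)F$ — that upgrades the generic $L_{2,\rho}$ bound to uniform convergence. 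Identifying the surviving main term with $f(x)$ is a contour-integral form of the completeness of the trigonometric system on $(0,\mu(\pi))$, transported through the map $x\mapsto\mu(x)$ dictated by the discontinuous weight $\rho$.

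The remaining assertions need no contour machinery. The eigenvectors $\Phi_n=\bigl(\varphi_1(\cdot,\lambda_n),\varphi_2(\cdot,\lambda_n),\,b_3\varphi_2(0,\lambda_n)+b_4\varphi_1(0,\lambda_n),\,c_3\varphi_2(\pi,\lambda_n)+c_4\varphi_1(\pi,\lambda_n)\bigr)^T$ are pairwise orthogonal in $H_\rho$ (eigenvectors for distinct eigenvalues are orthogonal), and $\|\Phi_n\|^2=\alpha_n$ is precisely the definition (\ref{5}); with the completeness theorem this shows that $\{\alpha_n^{-1/2}\Phi_n\}$ is an orthonormal basis of $H_\rho$. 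Hence for every $F\in H_\rho$ the generalized Fourier series converges in $H_\rho$ and
\[
\|F\|^2=\sum_{n=-\infty}^{\infty}\frac{|\langle F,\Phi_n\rangle|^2}{\alpha_n}=\sum_{n=-\infty}^{\infty}\alpha_n|a_n|^2 .
\]
Specializing to $f\in L_{2,\rho}(0,\pi;\mathbb{C}^2)$, embedded in $H_\rho$ with vanishing $\mathbb{C}^2$-component, yields the $L_{2,\rho}$-convergence of (\ref{22}) and the Parseval equality (\ref{23}).
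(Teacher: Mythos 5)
Your overall architecture (contour integral of the resolvent over $\Gamma_N$, residue calculus giving $-\sum_{|n|\le N}a_n\varphi(x,\lambda_n)$, then an independent evaluation of the same integral, plus an orthonormal-basis argument for the $L_{2,\rho}$ convergence and Parseval equality) matches the paper's, and your residue computation and your treatment of the $f_3$, $f_4$ components and of (\ref{23}) are correct. The problem is the step you yourself flag as "the substantive step": you never actually prove that the direct evaluation of the contour integral converges to $-f(x)$ uniformly on $[0,\pi]$. Reducing the leading part of the Green's function to "a classical Dirichlet-kernel integral in the variable $\mu(x)$ with limit $-f(x)$" is an assertion, not an argument; making it rigorous requires a localization/Riemann--Lebesgue analysis of the kernel $\sin N(\mu(x)-\mu(t))/(\mu(x)-\mu(t))$-type integrals, uniformly in $x$ including at $x=0$, $x=\pi$ and at the discontinuity point $x=a$ of $\rho$, and with control of the error committed when $\varphi$, $\psi$, $\Delta$ are replaced by their leading asymptotics (these errors are only $O(1)$ relative to the main term on $\Gamma_N$, so they do not obviously integrate to zero without further structure). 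As written, the proof has a genuine gap at its central point.

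The paper avoids this entirely by a different device: using the differential equation to write $\rho(t)\widetilde{\varphi}(t,\lambda)=\lambda^{-1}\left\{-\partial_t\widetilde{\varphi}(t,\lambda)B+\widetilde{\varphi}(t,\lambda)\Omega(t)\right\}$ (and similarly for $\psi$) inside (\ref{15}), then integrating by parts. The boundary terms produced by the integration by parts combine through the Wronskian $W[\varphi,\psi]=\Delta(\lambda)$ to give the \emph{exact} identity (\ref{24}), $y(x,\lambda)=-\lambda^{-1}f(x)-\lambda^{-1}z(x,\lambda)+f_4\Delta^{-1}\varphi+f_3\Delta^{-1}\psi$, where $z$ involves only $f'$ and $\Omega f$ (this is where $F\in D(L)$ enters, exactly as you suspected). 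The contour integral of $-\lambda^{-1}f(x)$ is then \emph{exactly} $-f(x)$ with no kernel analysis, and the remainder $\epsilon_N(x)=-\frac{1}{2\pi i}\oint_{\Gamma_N}\lambda^{-1}z(x,\lambda)\,d\lambda$ tends to zero uniformly by (\ref{20})--(\ref{21}) and (\ref{d}). I recommend you replace your Dirichlet-kernel step with this integration-by-parts decomposition: it is the one idea your proposal is missing, and it converts the hard uniform estimate into the already-established limits (\ref{20})--(\ref{21}).
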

\begin{proof}
Since $\varphi (x,\lambda )$ and $\psi (x,\lambda )$ are solution of the
problem (\ref{1}),(\ref{2}),
\begin{eqnarray}
y(x,\lambda ) &=&-\frac{1}{\lambda \Delta (\lambda )}\psi (x,\lambda
)\int_{0}^{x}\left\{ -\frac{\partial }{\partial t}\widetilde{\varphi } \nonumber
(t,\lambda )B+\widetilde{\varphi }(t,\lambda )\Omega (t)\right\} f(t)dt \\ \nonumber
&&-\frac{1}{\lambda \Delta (\lambda )}\varphi (x,\lambda )\int_{x}^{\pi
}\left\{ -\frac{\partial }{\partial t}\widetilde{\psi }(t,\lambda )B+
\widetilde{\psi }(t,\lambda )\Omega (t)\right\} f(t)dt  \\ \nonumber
&&+\frac{f_{4}}{\Delta (\lambda )}\varphi (x,\lambda )+\frac{f_{3}}{\Delta
(\lambda )}\psi (x,\lambda )  \nonumber
\end{eqnarray}
can be written. Integrating by parts and using the expression of Wronskian
\begin{equation} \label{24}
y(x,\lambda )=-\frac{1}{\lambda }f(x)-\frac{1}{\lambda }z(x,\lambda )+\frac{
f_{4}}{\Delta (\lambda )}\varphi (x,\lambda )+\frac{f_{3}}{\Delta (\lambda )}
\psi (x,\lambda )  
\end{equation}
is obtained, where
\[
z(x,\lambda)=\frac{1}{\Delta (\lambda )}\left\{ \psi (x,\lambda
)\int_{0}^{x}\widetilde{\varphi }(t,\lambda )Bf^{^{\prime }}(t)dt+\varphi
(x,\lambda )\int_{x}^{\pi }\widetilde{\psi }(t,\lambda )Bf^{^{\prime
}}(t)dt+\right.
\]
\[
\ \ \ \ \ \ \ \ \ \ \ +\left. \psi (x,\lambda )\int_{0}^{x}\widetilde{\varphi}
(t,\lambda)\Omega (t)f(t)dt+\varphi (x,\lambda )\int_{x}^{\pi}\widetilde{\psi}
(t,\lambda)\Omega(t)f(t)dt\right\}.
\]
It follows from (\ref{20}) and (\ref{21}) that 
\begin{equation}  \label{25}
\lim_{\left\vert \lambda \right\vert \rightarrow \infty }\max_{0\leq x\leq
\pi }\left\vert z(x,\lambda )\right\vert =0,\ \ \ \ \lambda \in G_{\delta }.
\end{equation}
Now, we integrate $y(x,\lambda )$ with respect to $\lambda $ over the
contour $\Gamma _{N}$ with oriented counter clockwise as follows:
\[
I_{N}(x)=\frac{1}{2\pi i}\oint_{\Gamma _{N}}y(x,\lambda )d\lambda ,
\] where
\[
\Gamma _{N}=\left\{ \lambda :\left\vert \lambda \right\vert =\left( N+\frac{1
}{\pi }\arctan \left(\frac{c_{3}b_{4}-c_{4}b_{3}}{b_{3}c_{3}+c_{4}b_{4}}\right)\right) 
\frac{\pi }{\mu \left( \pi \right) }+\frac{\pi }{2\mu (\pi )}\right\} ,
\] $N$ is sufficiently large natural number. Applying Residue theorem, we have
\begin{eqnarray} \nonumber
\lefteqn{I_{N}(x)=\sum_{n=-N}^{N}\underset{\lambda =\lambda_{n}}{Res}\ y(x,\lambda )} \\ \nonumber
&=&-\sum_{n=-N}^{N}\frac{1}{\alpha _{n}}\varphi (x,\lambda
_{n})\int_{0}^{\pi }\widetilde{\varphi }(t,\lambda _{n})f(t)\rho(t)dt
+\sum_{n=-N}^{N}\frac{f_{4}}{\dot{\Delta}(\lambda _{n})}\varphi(x,\lambda _{n}) 
+\sum_{n=-N}^{N}\frac{f_{3}}{\dot{\Delta}(\lambda _{n})}\psi(x,\lambda _{n}). \nonumber
\end{eqnarray}
On the other hand, taking into account the equation (\ref{24})
\begin{equation}  \label{26}
f(x)=\sum_{n=-N}^{N}a_{n}\varphi (x,\lambda _{n})+\epsilon _{N}(x)
\end{equation}
is found, where 
\[
\epsilon _{N}(x)=-\frac{1}{2\pi i}\oint_{\Gamma _{N}}\frac{1}{\lambda}z(x,\lambda )d\lambda
\] and 
\[
a_{n}=\frac{1}{\alpha _{n}}\int_{0}^{\pi }\widetilde{\varphi }(t,\lambda_{n})f(t)\rho (t)dt.
\] From (\ref{25}), ${\lim }_{N\rightarrow
\infty}{\max}_{0\leq x\leq \pi }\left\vert \epsilon
_{N}(x)\right\vert =0.$ Thus, by going over in (\ref{26}) to the limit as $
N\rightarrow \infty $ the expansion formula (\ref{22}) with respect to
eigenfunction is obtained. Since the system of $\varphi (x,\lambda
_{n}),(n\in \mathbb{Z})$ is complete and orthogonal in $L_{2,\rho}(0,\pi;\mathbb{C}^{2})\oplus \mathbb{C}^{2}$, 
Parseval equality (\ref{23}) is valid. 
\end{proof}

\section{Uniqueness Theorem by Weyl Function}
In this section, we define Weyl function and Weyl solution. Uniqueness theorem for 
inverse problem according to Weyl function is proved.

Denote by $\Phi(x,\lambda)=\left(
\begin{array}{c}
\Phi_{1}(x,\lambda) \\
\Phi_{2}(x,\lambda)
\end{array}
\right)$ the solution of the system (\ref{1}),
satisfying the conditions 
\begin{eqnarray} \nonumber
b_{1}\Phi _{2}(0,\lambda )+b_{2}\Phi _{1}(0,\lambda )-\lambda \left(
b_{3}\Phi _{2}(0,\lambda )+b_{4}\Phi _{1}(0,\lambda )\right) &=&1, \\ \nonumber
\\
c_{1}\Phi _{2}(\pi ,\lambda )+c_{2}\Phi _{1}(\pi ,\lambda )+\lambda \left(
c_{3}\Phi _{2}(\pi ,\lambda )+c_{4}\Phi _{1}(\pi ,\lambda )\right) &=& 0. \nonumber
\end{eqnarray}
The function $\Phi (x,\lambda )$ is called Weyl solution of the problem (\ref{1}),(\ref{2}). 
Let the function $C(x,\lambda)=\left(
\begin{array}{c}
C_{1}(x,\lambda) \\
C_{2}(x,\lambda)
\end{array}
\right)$ be the solution of system (\ref{1}), satisfying the initial condition 
\[
C_{1}(0,\lambda )=-\frac{b_{3}}{k_{1}},\ \ \ \ C_{2}(0,\lambda )=\frac{b_{4}}{k_{1}}.
\] As in Lemma \ref{lma2}, it is obtained that $C(x,\lambda)=\left(
\begin{array}{c}
C_{1}(x,\lambda) \\
C_{2}(x,\lambda)
\end{array}
\right)$ has the following integral representation
\begin{eqnarray} \nonumber
C_{1}(x,\lambda)&=&-\frac{b_{3}}{k_{1}}cos\lambda\mu(x)
-\frac{b_{4}}{k_{1}}sin\lambda\mu(x)
-\frac{b_{3}}{k_{1}}\int_{0}^{\mu(x)}\left[\tilde{B}_{11}(x,t)
cos\lambda t+\stackrel{\approx}{B}_{12}(x,t)sin\lambda t\right]dt- \\ \nonumber
&&-\frac{b_{4}}{k_{1}}\int_{0}^{\mu(x)}\left[\stackrel{\approx}{B}_{11}(x,t)sin\lambda t-\tilde{B}_{12}(x,t)cos\lambda t\right]dt, \nonumber
\end{eqnarray}
\begin{eqnarray} \nonumber
C_{2}(x,\lambda)&=&\frac{b_{4}}{k_{1}}cos\lambda\mu(x)
-\frac{b_{3}}{k_{1}}sin\lambda\mu(x) 
-\frac{b_{3}}{k_{1}}\int_{0}^{\mu(x)}\left[\tilde{B}_{21}(x,t)
cos\lambda t+\stackrel{\approx}{B}_{22}(x,t)sin\lambda t\right]dt- \\ \nonumber
&&-\frac{b_{4}}{k_{1}}\int_{0}^{\mu(x)}\left[\stackrel{\approx}{B}_{21}(x,t)sin\lambda t
-\tilde{B}_{22}(x,t)cos\lambda t\right]dt, \nonumber
\end{eqnarray}
where $\tilde{B}_{ij}\left( x,.\right)\in L_{2}\left( 0,\pi \right),$ $\stackrel{\approx }{B}_{ij}\left( x,.\right)\in L_{2}\left( 0,\pi\right) ,$ $i,j=1,2.$
The solution $\psi (x,\lambda)$ can be shown that 
\begin{equation} \label{27}
\frac{\psi (x,\lambda )}{\Delta \left( \lambda \right) }=C\left( x,\lambda
\right) -\frac{\left( b_{4}\psi _{1}\left( 0,\lambda \right) +b_{3}\psi
_{2}\left( 0,\lambda \right) \right) }{k_{1}\Delta \left( \lambda \right)}
\varphi \left( x,\lambda \right) .  
\end{equation}
Denote 
\begin{equation}  \label{28}
M(\lambda ):=-\frac{\left( b_{4}\psi _{1}\left( 0,\lambda \right) +b_{3}\psi
_{2}\left( 0,\lambda \right) \right) }{k_{1}\Delta \left( \lambda \right) }.
\end{equation}
It is obvious that 
\begin{equation}  \label{29}
\Phi (x,\lambda )=C(x,\lambda )+M(\lambda )\varphi (x,\lambda).  
\end{equation}
The function 
\[M(\lambda)=-\frac{\left( b_{4}\Phi _{1}\left( 0,\lambda
\right) +b_{3}\Phi _{2}\left( 0,\lambda \right) \right) }{k_{1}}\]
is called the Weyl function of the boundary value problem (\ref{1}),(\ref{2}). The
Weyl solution and Weyl function are meromorphic functions having simple
poles at points $\lambda _{n}$ eigenvalues of problem (\ref{1}),(\ref{2}).
It is obtained from (\ref{27}) and (\ref{29}) that 
\begin{equation}  \label{30}
\Phi (x,\lambda )=\frac{\psi (x,\lambda )}{\Delta (\lambda )}.  
\end{equation}
\begin{theorem} 
For the Weyl function $M(\lambda )$, the following representation holds: 
\begin{equation}  \label{33}
M(\lambda)=\sum_{n=-\infty }^{\infty}\frac{1}{\alpha _{n}(\lambda -\lambda _{n})}.  
\end{equation}
\end{theorem}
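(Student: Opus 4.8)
The plan is to obtain (\ref{33}) as the Mittag--Leffler expansion of the meromorphic function $M(\lambda)$, assembling it from its principal parts at the eigenvalues together with a contour argument that controls the behaviour at infinity. By (\ref{28}) and (\ref{30}) one has $M(\lambda)=N(\lambda)/\Delta(\lambda)$ with $N(\lambda)=-\bigl(b_4\psi_1(0,\lambda)+b_3\psi_2(0,\lambda)\bigr)/k_1$; since the $\lambda_n$ are simple zeros of $\Delta$ for large $|n|$, the function $M$ has simple poles exactly at the points $\lambda_n$, so the first task is to compute the residues there.

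To evaluate $\mathrm{Res}_{\lambda=\lambda_n}M(\lambda)=N(\lambda_n)/\dot{\Delta}(\lambda_n)$, I would invoke Lemma \ref{lma}, namely $\psi(x,\lambda_n)=\beta_n\varphi(x,\lambda_n)$, together with the initial values $\varphi_1(0,\lambda_n)=\lambda_n b_3-b_1$ and $\varphi_2(0,\lambda_n)=b_2-\lambda_n b_4$. A direct substitution gives $b_4\psi_1(0,\lambda_n)+b_3\psi_2(0,\lambda_n)=\beta_n\bigl(b_4\varphi_1(0,\lambda_n)+b_3\varphi_2(0,\lambda_n)\bigr)=-\beta_n k_1$, so that $N(\lambda_n)=\beta_n$. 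Combining this with the relation $\alpha_n\beta_n=\dot{\Delta}(\lambda_n)$ established earlier yields $\mathrm{Res}_{\lambda=\lambda_n}M(\lambda)=\beta_n/(\alpha_n\beta_n)=1/\alpha_n$, which is precisely the residue at $\lambda_n$ of the $n$-th summand on the right-hand side of (\ref{33}).

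I would then close the argument by integrating $M(\xi)/(\xi-\lambda)$ over the expanding contours $\Gamma_N$ (oriented counter-clockwise) and letting $N\to\infty$. For a fixed $\lambda$ lying inside $\Gamma_N$ and off the spectrum, the residue theorem gives $\frac{1}{2\pi i}\oint_{\Gamma_N}\frac{M(\xi)}{\xi-\lambda}\,d\xi=M(\lambda)-\sum_{|n|\le N}\frac{1}{\alpha_n(\lambda-\lambda_n)}$, so it suffices to show that the left-hand integral tends to $0$. This is where the main obstacle lies: one must estimate $M$ uniformly on $\Gamma_N$. Using the integral representation and asymptotics of $\psi(\cdot,\lambda)$ (derived exactly as in Lemma \ref{lma2}, starting now from the data at $x=\pi$), the numerator satisfies $|N(\lambda)|=O\bigl(|\lambda|e^{|\mathrm{Im}\lambda|\mu(\pi)}\bigr)$, while the lower bound (\ref{d}), valid on $\Gamma_N\subset G_\delta$, gives $|\Delta(\lambda)|\ge C_\delta|\lambda|^2 e^{|\mathrm{Im}\lambda|\mu(\pi)}$; hence $|M(\lambda)|=O(1/|\lambda|)$ on $\Gamma_N$. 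Since the length of $\Gamma_N$ grows like its radius and $\mathrm{dist}(\lambda,\Gamma_N)$ grows likewise, the standard length-times-supremum estimate yields $\bigl|\frac{1}{2\pi i}\oint_{\Gamma_N}\frac{M(\xi)}{\xi-\lambda}\,d\xi\bigr|=O(1/N)\to 0$, and letting $N\to\infty$ gives (\ref{33}).
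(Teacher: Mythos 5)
Your proposal is correct and follows essentially the same route as the paper: compute $\mathrm{Res}_{\lambda=\lambda_n}M(\lambda)=1/\alpha_n$ via $\psi(x,\lambda_n)=\beta_n\varphi(x,\lambda_n)$ and $\alpha_n\beta_n=\dot{\Delta}(\lambda_n)$, then integrate $M(\xi)/(\xi-\lambda)$ over the expanding contours $\Gamma_N$ and use the decay of $M$ on $G_\delta$. The only (cosmetic) difference is that the paper establishes this decay by rewriting the numerator through the Wronskian identity $W[C,\psi]$ evaluated at $x=\pi$ and the integral representation of $C(x,\lambda)$, whereas you bound $b_4\psi_1(0,\lambda)+b_3\psi_2(0,\lambda)$ directly from the asymptotics of $\psi$.
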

\begin{proof}
Since 
\begin{eqnarray}\nonumber
W[C(x,\lambda),\psi(x,\lambda)]&=&C_{2}(x,\lambda)\psi_{1}(x,\lambda)-C_{1}(x,\lambda)\psi_{2}(x,\lambda) \\ \nonumber
\\ \nonumber
&=&\frac{b_{4}\psi_{1}(0,\lambda)+b_{3}\psi(0,\lambda)}{k_{1}} \\ \nonumber
\\ \nonumber
&=&-\left[\left(c_{1}+\lambda c_{3}\right)C_{2}(\pi,\lambda)+\left(c_{2}+\lambda c_{4}\right)C_{1}(\pi,\lambda)\right], \nonumber
\end{eqnarray} we can rewrite the Weyl function (\ref{28}) as follows
\[
M(\lambda)=\frac{\left(c_{1}+\lambda c_{3}\right)C_{2}(\pi,\lambda)+\left(c_{2}+\lambda c_{4}\right)C_{1}(\pi,\lambda)}{\Delta(\lambda)}.
\]
Using the expression of solution $C(x,\lambda)$ and (\ref{d}), we have
\begin{equation}\label{35}
\lim_{\stackrel{\left\vert \lambda \right\vert \rightarrow \infty }{\lambda
\in G_{\delta }}}\left|M(\lambda)\right|=0.
\end{equation}
Since $\psi(x,\lambda_{n})=\beta_{n}\varphi(x,\lambda_{n})$,
\[
\beta_{n}=-\frac{b_{4}\psi
_{1}\left( 0,\lambda _{n}\right) +b_{3}\psi _{2}(0,\lambda _{n})}{k_{1}}.
\] Then, we get
\begin{equation}\label{c}
\underset{\lambda =\lambda _{n}}{Res}\ M(\lambda) =-\frac{b_{4}\psi
_{1}\left( 0,\lambda _{n}\right) +b_{3}\psi _{2}(0,\lambda _{n})}{k_{1}\dot{
\Delta}(\lambda _{n})}=\frac{1}{\alpha _{n}}.
\end{equation} 
Consider the following contour integral 
\[
I_{N}(\lambda )=\frac{1}{2\pi i}\int_{\Gamma _{N}(\lambda )}\frac{M(\xi)}
{\xi -\lambda }d\xi ,\ \ \ \xi \in int\Gamma _{N},
\] where 
\[
\Gamma _{N}=\left\{\lambda :\left\vert \lambda \right\vert =\left( N+\frac{1
}{\pi }\arctan \left(\frac{c_{3}b_{4}-c_{4}b_{3}}{b_{3}c_{3}+c_{4}b_{4}}\right)\right) 
\frac{\pi }{\mu \left( \pi \right) }+\frac{\pi }{2\mu (\pi )}\right\} .
\] It follows from (\ref{35}) that ${\lim}_{N\rightarrow \infty}
I_{N}(\lambda )=0.$ On the other hand, applying Residue theorem and the
residue (\ref{c}),
\[
I_{N}(\lambda )=M(\lambda )+\sum_{\lambda _{n}\in int\Gamma
_{N}}\frac{1}{\alpha _{n}(\lambda _{n}-\lambda )}
\] is found.
Thus, as $N\rightarrow \infty$ 
\[
M(\lambda)=\sum_{n=-\infty}^{\infty}\frac{1}{\alpha_{n}(\lambda -\lambda_{n})}
\]
is obtained.
\end{proof}

Now, we seek inverse problem of the reconstruction of the problem (\ref{1}), (\ref{2}) 
by Weyl function $M\left( \lambda \right) $ and spectral data $
\left\{ \lambda _{n},\alpha _{n}\right\} ,(n\in \mathbb{Z})$. Along with problem (\ref{1}),(\ref{2}), we consider a boundary value problem of the same
form, but with another potential function $\tilde{\Omega}(x)$. Let's agree to that if some symbol s denotes
an object relating to the problem (\ref{1}),(\ref{2}), then $\tilde{s}$ will denote an object, relating to
the boundary value problem with the function $\tilde{\Omega}(x)$.
\begin{theorem} \label{the} 
If $M(\lambda )=\tilde{M}(\lambda )$, $\Omega(x)=\tilde{\Omega}(x)$, i.e. the boundary value problem (\ref{1}), (\ref{2}) is uniquely determined by the Weyl function.
\end{theorem}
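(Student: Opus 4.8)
The plan is to prove this uniqueness theorem by the classical Weyl-function method of Marchenko–Yurko type, using the Weyl solution $\Phi(x,\lambda)$ and a contour-integral / Liouville argument. First I would introduce the matrix of solutions built from $\varphi$ and $\Phi$, namely the matrix
\[
P(x,\lambda)=\begin{pmatrix}\varphi_{1}(x,\lambda) & \Phi_{1}(x,\lambda)\\ \varphi_{2}(x,\lambda) & \Phi_{2}(x,\lambda)\end{pmatrix}\begin{pmatrix}\tilde{\varphi}_{1}(x,\lambda) & \tilde{\Phi}_{1}(x,\lambda)\\ \tilde{\varphi}_{2}(x,\lambda) & \tilde{\Phi}_{2}(x,\lambda)\end{pmatrix}^{-1},
\]
formed from the solutions of problem \eqref{1},\eqref{2} and of the companion problem with potential $\tilde{\Omega}(x)$. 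Since $W[\varphi,\Phi]=\varphi_{2}\Phi_{1}-\varphi_{1}\Phi_{2}$ equals $1$ (because $\Phi=C+M\varphi$ and $W[\varphi,C]=1$ by the choice of the initial data for $C$), the inverse of the second factor is an entire-coefficient matrix, so the entries $P_{jk}(x,\lambda)$ are well defined.

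Next I would express $P$ explicitly in terms of the Weyl functions. Writing $\Phi=C+M\varphi$ and $\tilde{\Phi}=\tilde{C}+\tilde{M}\tilde{\varphi}$ and using the Wronskian normalization, a direct computation shows that the entries $P_{jk}(x,\lambda)$ depend on $M(\lambda)-\tilde{M}(\lambda)$ only through the difference, so under the hypothesis $M(\lambda)=\tilde{M}(\lambda)$ the apparent poles at the eigenvalues $\lambda_{n}$ (and $\tilde{\lambda}_{n}$) cancel. The key step is then to show that for each fixed $x$ the functions $P_{11}(x,\lambda)$ and $P_{12}(x,\lambda)$ are entire in $\lambda$ and bounded as $|\lambda|\to\infty$. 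Boundedness follows from the integral representations in Lemma \ref{lma2} together with the asymptotics \eqref{9},\eqref{10}, the lower bound \eqref{d} for $|\Delta(\lambda)|$ off the circles, and the decay \eqref{35} of the Weyl function: the exponential factors $e^{|\mathrm{Im}\,\lambda|\mu(x)}$ from $\varphi,\tilde\varphi$ cancel against those from $\Phi,\tilde\Phi$ (which decay like $e^{-|\mathrm{Im}\,\lambda|\mu(x)}$ through $\psi/\Delta$), leaving quantities that stay bounded. By Liouville's theorem $P_{11}\equiv 1$ and $P_{12}\equiv 0$, whence $P(x,\lambda)\equiv I$.

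From $P\equiv I$ I would read off $\varphi(x,\lambda)\equiv\tilde{\varphi}(x,\lambda)$ and $\Phi(x,\lambda)\equiv\tilde{\Phi}(x,\lambda)$ for all $x$ and $\lambda$. Substituting these identities back into the Dirac system \eqref{1}, $By'+\Omega(x)y=\lambda\rho(x)y$, and comparing with the same equation for $\tilde\varphi$ forces $\Omega(x)\varphi(x,\lambda)=\tilde\Omega(x)\varphi(x,\lambda)$ for all $\lambda$; since $\varphi(x,\lambda)$ spans $\mathbb{C}^{2}$ as $\lambda$ varies, this yields $\Omega(x)=\tilde{\Omega}(x)$, that is $p=\tilde p$ and $q=\tilde q$ almost everywhere, which is the assertion.

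The main obstacle I expect is the estimate establishing that $P(x,\lambda)$ is bounded (hence entire and constant) uniformly in $x\in[0,\pi]$: one must track the exponential growth $e^{|\mathrm{Im}\,\lambda|\mu(x)}$ across the discontinuity of $\rho$ at $x=a$ where $\mu(x)$ changes slope, and verify that the growth from $\varphi$ is exactly compensated by the decay of $\Phi=\psi/\Delta$ uniformly in $x$, using \eqref{9},\eqref{10},\eqref{d} on the contours $\Gamma_{N}$ and the Weyl-function bound \eqref{35}. The cancellation of poles at $\lambda_{n}$ must also be checked carefully, since $\lambda_{n}$ and $\tilde{\lambda}_{n}$ are a priori distinct before the identity $M=\tilde M$ is exploited; the standard resolution is that $M=\tilde M$ forces the two spectra and all residues $1/\alpha_{n}$ to coincide via the expansion \eqref{33}, so that the poles of $P$ are removable.
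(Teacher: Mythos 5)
Your proposal is correct and follows essentially the same route as the paper: the same matrix $P(x,\lambda)$ defined via the fundamental systems $(\varphi,\Phi)$ and $(\tilde\varphi,\tilde\Phi)$ with the Wronskian normalization $W[\tilde\varphi,\tilde\Phi]=1$, the same substitution $\Phi=C+M\varphi$ to show the entries depend on $M-\tilde M$ and hence are entire when $M\equiv\tilde M$, the same boundedness estimates via \eqref{d} and \eqref{35} leading to $P\equiv I$ by a Liouville argument, and the same conclusion $\varphi\equiv\tilde\varphi$, $\Phi\equiv\tilde\Phi$, hence $\Omega\equiv\tilde\Omega$. Your closing remark that $\Omega=\tilde\Omega$ follows because $\varphi(x,\lambda)$ spans $\mathbb{C}^2$ as $\lambda$ varies is a useful elaboration of a step the paper leaves implicit.
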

\begin{proof}
We describe the matrix $P(x,\lambda )=\left[ P_{ij}(x,\lambda )\right]_{i,j=1,2}$ with the formula 
\begin{equation}  \label{38}
P(x,\lambda )\left( 
\begin{array}{cc}
\tilde{\varphi _{1}} & \tilde{\Phi}_{1} \\ 
\tilde{\varphi _{2}} & \tilde{\Phi}_{2}
\end{array}
\right) =\left( 
\begin{array}{cc}
\varphi _{1} & \Phi _{1} \\ 
\varphi _{2} & \Phi _{2}
\end{array}
\right).  
\end{equation}
The Wronskian of the solutions $\tilde{\varphi}(x,\lambda )$ and $\ \tilde{\Phi}(x,\lambda )$ is 
\begin{equation}  \label{39}
W[\tilde{\varphi}(x,\lambda ),\tilde{\Phi}(x,\lambda )]=\tilde{\varphi _{2}}
(x,\lambda )\tilde{\Phi _{1}}(x,\lambda )-\tilde{\varphi _{1}}(x,\lambda )
\tilde{\Phi _{2}}(x,\lambda )=1.  
\end{equation}
Using (\ref{38}) and (\ref{39}), we calculate 
\begin{equation}  \label{40} 
\begin{array}{cc}
P_{11}(x,\lambda )=\Phi _{1}(x,\lambda )\tilde{\varphi}_{2}(x,\lambda
)-\varphi _{1}(x,\lambda )\tilde{\Phi}_{2}(x,\lambda ), \\
P_{12}(x,\lambda )=\varphi _{1}(x,\lambda )\tilde{\Phi}_{1}(x,\lambda
)-\Phi _{1}(x,\lambda )\tilde{\varphi}_{1}(x,\lambda ), \\
P_{21}(x,\lambda )=\Phi _{2}(x,\lambda )\tilde{\varphi}_{2}(x,\lambda
)-\varphi _{2}(x,\lambda )\tilde{\Phi}_{2}(x,\lambda ), \\
P_{22}(x,\lambda )=\varphi _{2}(x,\lambda )\tilde{\Phi}_{1}(x,\lambda
)-\Phi _{2}(x,\lambda )\tilde{\varphi}_{1}(x,\lambda )
\end{array}
\end{equation}
and
\begin{equation}   \label{41}
\begin{array}{cc}
\varphi _{1}(x,\lambda )=P_{11}(x,\lambda )\tilde{\varphi}_{1}(x,\lambda
)+P_{12}(x,\lambda )\tilde{\varphi}_{2}(x,\lambda ),  \\
\varphi _{2}(x,\lambda )=P_{21}(x,\lambda )\tilde{\varphi}_{1}(x,\lambda
)+P_{22}(x,\lambda )\tilde{\varphi}_{2}(x,\lambda ),  \\
\Phi _{1}(x,\lambda )=P_{11}(x,\lambda )\tilde{\Phi}_{1}(x,\lambda
)+P_{12}(x,\lambda )\tilde{\Phi}_{2}(x,\lambda ),  \\
\Phi _{2}(x,\lambda )=P_{21}(x,\lambda )\tilde{\Phi}_{1}(x,\lambda
)+P_{22}(x,\lambda )\tilde{\Phi}_{2}(x,\lambda ). 
\end{array}
\end{equation}
Taking into account (\ref{30}), (\ref{39}) and (\ref{40}), 
\[
P_{11}(x,\lambda)-1=\frac{\tilde{\psi}_{2}(x,\lambda)}{\tilde{\Delta}
(\lambda)}\{\tilde{\varphi}_{1}(x,\lambda)-\varphi_{1}(x,\lambda)\} 
-\tilde{\varphi}_{2}(x,\lambda)\left\{\frac{\tilde{\psi}_{1}(x,\lambda)}
{\tilde{\Delta}(\lambda)}-\frac{\psi_{1}(x,\lambda)}{\Delta(\lambda)}\right\} 
\]
\[
P_{12}(x,\lambda)=\frac{\psi_{1}(x,\lambda)}{\Delta(\lambda)}\{\varphi_{1}(x,\lambda)
-\tilde{\varphi}_{1}(x,\lambda)\}+\varphi_{1}(x,\lambda)
\left\{\frac{\tilde{\psi}_{1}(x,\lambda )}{\tilde{\Delta}
(\lambda)}-\frac{\psi _{1}(x,\lambda )}{\Delta(\lambda)}\right\}
\]
\[
P_{21}(x,\lambda)=\frac{\psi_{2}(x,\lambda )}{\Delta(\lambda)}
\{\tilde{\varphi}_{2}(x,\lambda)-\varphi_{2}(x,\lambda)\} 
+\varphi_{2}(x,\lambda)\left\{\frac{\psi _{2}(x,\lambda)}{\Delta(\lambda)}-
\frac{\tilde{\psi}_{2}(x,\lambda )}{\tilde{\Delta}(\lambda )}\right\}
\]
\[
P_{22}(x,\lambda)-1=\tilde{\varphi}_{1}(x,\lambda)
\left\{\frac{\tilde{\psi}_{2}(x,\lambda)}
{\tilde{\Delta}(\lambda)}-\frac{\psi_{2}(x,\lambda)}{\Delta(\lambda)}\right\} 
-\frac{\tilde{\psi}_{1}(x,\lambda)}{\tilde{\Delta}(\lambda)}
\{\tilde{\varphi}_{2}(x,\lambda)-\varphi_{2}(x,\lambda)\}
\] 
are found. Using (\ref{d}), we obtain 
\begin{equation} \label{42}
\begin{array}{cc}
\lim_{\stackrel{\left\vert \lambda \right\vert \rightarrow \infty }{\lambda
\in G_{\delta }}}\max_{0\leq x\leq \pi }\left\vert P_{11}(x,\lambda)-1\right\vert  =0,\\
\lim_{\stackrel{\left\vert \lambda \right\vert
\rightarrow \infty }{\lambda \in G_{\delta }}}\max_{0\leq x\leq \pi
}\left\vert P_{22}(x,\lambda )-1\right\vert =0,  \\
\lim_{\stackrel{\left\vert \lambda \right\vert \rightarrow \infty }{\lambda
\in G_{\delta }}}\max_{0\leq x\leq \pi }\left\vert P_{12}(x,\lambda
)\right\vert =0,\\
\lim_{\stackrel{\left\vert \lambda \right\vert
\rightarrow \infty }{\lambda \in G_{\delta }}}\max_{0\leq x\leq \pi
}\left\vert P_{21}(x,\lambda )\right\vert =0. 
\end{array}
\end{equation}
Substituting (\ref{29}) into (\ref{40}), we have 
\[
P_{11}(x,\lambda)=C_{1}(x,\lambda)\tilde{\varphi}_{2}(x,\lambda )-\varphi
_{1}(x,\lambda )\tilde{C}_{2}(x,\lambda)
+\varphi _{1}(x,\lambda )\tilde{\varphi}_{2}(x,\lambda)
\left[ M(\lambda)-\tilde{M}(\lambda)\right], 
\]
\[
P_{12}(x,\lambda)=\varphi_{1}(x,\lambda)\tilde{C}_{1}(x,\lambda)-C_{1}(x,\lambda )
\tilde{\varphi}_{1}(x,\lambda)
+\varphi_{1}(x,\lambda)\tilde{\varphi}_{1}(x,\lambda)\left[\tilde{M}(\lambda)-M(\lambda)\right],  
\]
\[
P_{21}(x,\lambda)=C_{2}(x,\lambda)\tilde{\varphi}_{2}(x,\lambda)
-\varphi_{2}(x,\lambda)\tilde{C}_{2}(x,\lambda)
+\varphi_{2}(x,\lambda)\tilde{\varphi}_{2}(x,\lambda)\left[M(\lambda)-\tilde{M}(\lambda)\right], 
\]
\[
P_{22}(x,\lambda)=\varphi_{2}(x,\lambda)\tilde{C}_{1}(x,\lambda)
-C_{2}(x,\lambda)\tilde{\varphi}_{1}(x,\lambda) 
+\varphi_{2}(x,\lambda)\tilde{\varphi}_{1}(x,\lambda)\left[\tilde{M}(\lambda)-M(\lambda)\right]. 
\]
Hence, if $M(\lambda)\equiv \tilde{M}(\lambda )$, ${P_{ij}(x,\lambda)_{i,j=1,2}}$ 
are entire functions with respect to $\lambda $ for every
fixed $x$. Then from (\ref{42}), we find 
\[
P_{11}(x,\lambda )\equiv 1,\ \ \ P_{12}(x,\lambda )\equiv 0,
\]
\[
P_{21}(x,\lambda )\equiv 0,\ \ \ P_{22}(x,\lambda )\equiv 1.
\]
Substituting these identities into (\ref{41}), 
\[
\varphi _{1}(x,\lambda ) \equiv \tilde{\varphi}_{1}(x,\lambda ),\ \ \
\varphi _{2}(x,\lambda ) \equiv \tilde{\varphi}_{2}(x,\lambda ),
\]
\[
\Phi _{1}(x,\lambda ) \equiv \tilde{\Phi}_{1}(x,\lambda ),\ \ \ \Phi
_{2}(x,\lambda )\equiv \tilde{\Phi}_{2}(x,\lambda )
\] are obtained for all $x$ and $\lambda $, so $\Omega(x)\equiv \tilde{\Omega}(x).$
\end{proof}

According to (\ref{33}), the specification of the Weyl function $M(\lambda)$
is equivalent to the specification of the spectral data $\left\{\lambda_{n},\ \alpha_{n}\right\}$, $n\in \mathbb{Z}.$ That is,
if $\lambda _{n}=\tilde{\lambda}_{n},$ $\alpha _{n}=\tilde{\alpha}_{n}$ for all $n\in \mathbb{Z}$, 
$M(\lambda)=\tilde{M}(\lambda )$ is obtained.
It follows from Theorem \ref{the} that $\Omega(x)=\tilde{\Omega}(x)$. We have thus proved the following theorem:
\begin{theorem}
The problem (\ref{1}),(\ref{2}) is uniquely determined by spectral data 
$\left\{\lambda_{n},\ \alpha_{n}\right\}$, $n\in \mathbb{Z}.$
\end{theorem}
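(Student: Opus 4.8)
The plan is to obtain this as a direct corollary of the series representation (\ref{33}) of the Weyl function together with the uniqueness Theorem \ref{the}. The guiding idea is that the chain of implications
\[
\{\lambda_n,\alpha_n\}_{n\in\mathbb{Z}} \;\Longrightarrow\; M(\lambda) \;\Longrightarrow\; \Omega(x)
\]
is already available: the first arrow comes from (\ref{33}), which exhibits $M(\lambda)$ explicitly in terms of the data, and the second arrow is precisely the content of Theorem \ref{the}.

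First I would set up the comparison problem. Following the tilde convention fixed in the text, I consider a second boundary value problem of the same form (\ref{1}), (\ref{2}) with potential $\tilde{\Omega}(x)$, eigenvalues $\tilde{\lambda}_n$, norming constants $\tilde{\alpha}_n$, and Weyl function $\tilde{M}(\lambda)$. The hypothesis to be used is that the two problems share the same spectral data, i.e. $\lambda_n=\tilde{\lambda}_n$ and $\alpha_n=\tilde{\alpha}_n$ for all $n\in\mathbb{Z}$.

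Next I would apply the representation (\ref{33}) to each problem, obtaining
\[
M(\lambda)=\sum_{n=-\infty}^{\infty}\frac{1}{\alpha_n(\lambda-\lambda_n)},\qquad
\tilde{M}(\lambda)=\sum_{n=-\infty}^{\infty}\frac{1}{\tilde{\alpha}_n(\lambda-\tilde{\lambda}_n)}.
\]
Because the poles and the associated coefficients agree term by term under the standing hypothesis, these two meromorphic functions coincide, so $M(\lambda)\equiv\tilde{M}(\lambda)$. Finally I would invoke Theorem \ref{the}, which asserts that equality of the Weyl functions forces $\Omega(x)=\tilde{\Omega}(x)$; hence the problem (\ref{1}), (\ref{2}) is uniquely determined by the data $\{\lambda_n,\alpha_n\}$.

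Since every ingredient is already established, there is essentially no genuine obstacle here: the argument is a short deduction rather than a new construction. The only point that strictly needs noting is that the right-hand side of (\ref{33}) is manifestly a function of the pairs $\{\lambda_n,\alpha_n\}$ alone, so that specifying the spectral data is equivalent to specifying $M(\lambda)$; once that equivalence is made explicit, the conclusion follows immediately from Theorem \ref{the}.
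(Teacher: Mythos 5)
Your proposal is correct and follows exactly the paper's own argument: the authors likewise deduce $M(\lambda)=\tilde{M}(\lambda)$ from equality of the spectral data via the partial-fraction representation (\ref{33}) and then conclude $\Omega(x)=\tilde{\Omega}(x)$ by Theorem \ref{the}. No differences worth noting.
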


\end{document}